\documentclass{amsart}
\usepackage[latin1]{inputenc}
\usepackage[active]{srcltx}
\usepackage{fullpage}
\usepackage{amsfonts,enumerate}
\usepackage[mathscr]{euscript}
\usepackage{epsfig}
\usepackage{latexsym}
\usepackage[all]{xy}
\usepackage{amssymb}
\usepackage{amsthm}
\usepackage{amsmath}
\usepackage{amsxtra}
\usepackage{xcolor}
\usepackage[colorlinks]{hyperref}
\usepackage{fancyvrb}
\hypersetup{citecolor=blue}


.tfm
.tfm


\theoremstyle{plain}
\newtheorem{prop}{Proposition}[section]
\newtheorem{thm}[prop]{Theorem}
\newtheorem{coro}[prop]{Corollary}

\newtheorem{lemma}[prop]{Lemma}
\newtheorem{conj}{Conjecture}

\newtheorem*{thm*}{Theorem}
\newtheorem*{lemma*}{Lemma}

\newtheorem*{prop*}{Proposition}

\theoremstyle{definition}

\theoremstyle{remark}

\newtheorem{remark}{Remark}

\numberwithin{table}{section}


\DeclareMathOperator{\Frob}{Frob}

\DeclareMathOperator{\Cl}{Cl}
\DeclareMathOperator{\Gal}{Gal}



\newcommand{\F}{\mathbb F}

\newcommand{\Om}{{\mathscr{O}}}

\newcommand{\Disc}{\Delta}

\newcommand{\GL}{{\rm GL}}
\newcommand{\SL}{{\rm SL}}

\newcommand{\E}{E_{(a,b,c)}}

\def\ZZ{\mathbb Z}



\def\<#1>{{\left\langle{#1}\right\rangle}}



\def\Z{{\mathbb Z}}             
\def\Q{{\mathbb Q}}             



\def\id#1{{\mathfrak{#1}}}      


\DeclareMathOperator{\trace}{{\mathrm{Tr}}}




\begin{document}
	
	\title{Asymptotic Fermat for signature $(4,2,p)$ over number fields}

	\author{Lucas Villagra Torcomian}
	\address{FAMAF-CIEM, Universidad Nacional de C\'ordoba. C.P: 5000,
		C\'ordoba, Argentina.}  \email{lucas.villagra@unc.edu.ar}

	\thanks{The author was supported by a CONICET grant.}
	
	\keywords{Fermat equations, modular method}
	\subjclass[2010]{11D41, 11F80}
	
	\begin{abstract}	
	Let $K$ be a number field. Using the modular method,  we prove asymptotic results on solutions of the Diophantine equation $x^4-y^2=z^p$ over $K$, assuming some deep but standard conjectures of the Langlands programme when $K$ has at least one complex embedding. On the other hand, we give unconditional results in the case of totally real extensions having odd narrow class number and a unique prime above $2$. When modularity of elliptic curves over $K$ is known, for example when $K$ is real quadratic or the $r$-layer of the cyclotomic $\ZZ_2$-extension of $\Q$, effective asymptotic results hold.
	\end{abstract}
	\maketitle
	\section{Introduction}
\subsection{Historical background}
The study of Diophantine equations is one of the oldest problems in the area of number theory. Since Wiles' proof of Fermat's Last Theorem \cite{Wiles}, special attention has been paid in the so called Fermat-type equations, namely those of the form
\begin{equation}\label{eq:general}
Ax^q+By^r=Cz^p,
\end{equation}
where $A$, $B$ and $C$ are fixed non-zero pairwise coprime integers. In general, finding all solutions of a Diophantine equation is an extremely difficult problem, since there is no known algorithm to do that. On the other hand, Wiles' strategy used in his famous proof, referred to as the \textit{modular method}, can be used to prove the non-existence of (certain type of) solutions. The case $A=B=C=1$ is of special interest, where the modular method (together sometimes with other techniques) can be used to solve many of these equations. We refer to Tables 1 and 2 of \cite{IKOpp2} to see the known results until 2020. It is important to remark that the results collected in those tables are for integers solutions, i.e solutions with $x,y,z\in \Z$. The triple $(q,r,p)$ of~(\ref{eq:general}) is called the \textit{signature} of the equation and a \textit{primitive} solution $(x,y,z)=(a,b,c)$ is a solution where $a$, $b$ and $c$ are pairwise coprime. We also say that $(a,b,c)$ is \textit{non-trivial} if $abc\neq0$.

In \cite{Darmon1995}, Darmon and Granville proved that given a number field $K$ and a signature $(q,r,p)$ such that $1/q+1/r+1/p<1$, there are finitely many primitive solutions $(a,b,c)$ of~(\ref{eq:general}) with $a,b,c\in \Om_K$, where $\Om_K$ is the ring of integers of $K$. Although their result works for number fields, equation~(\ref{eq:general}) has been mainly studied for $K=\Q$. One of the main reasons is that modularity of elliptic curves plays a crucial role in the modular method, so the lack of results for modularity of elliptic curves over number fields is a big obstruction. Nevertheless, during the last years there has been lot of progress in that direction, and Diophantine results came to light. In 2015, using modularity of elliptic curves over real quadratic fields \cite{FHS}, Freitas and Siksek \cite{FS} shown that asymptotic Fermat for signature $(p,p,p)$ holds for infinitely many real quadratic fields $K$. That is, there exists a bound $B_K$ (depending only on $K$) such that if $p>B_K$, then the only solutions $(a,b,c)\in\Om_K^3$ of $x^p+y^p=z^p$ are the trivial ones. In 2018, \c{S}eng\"un and Siksek \cite{SS} gave sufficient conditions in order to asymptotic Fermat for signature $(p,p,p)$ holds for a given number field $K$. In that case, the authors assumed deep conjectures of then Langlands programme, such as modularity of elliptic curves over $K$. For similar results of equation~(\ref{eq:general}) with different values of $A$, $B$ and $C$ and signature $(p,p,p)$ we refer the reader to \cite{Deconinck, KaraOzman}. Following the same methodology, in 2020 I\c{s}ik, Kara and \"Ozman \cite{IKOpp2} worked with signature $(p,p,2)$ over totally real fields. In 2022, they also obtained asymptotic results for signature $(p,p,3)$ over arbitrary number fields (assuming same conjectures than \cite{SS}, see \cite{isik_kara_ozman_2022}); in the same year Mocanu \cite{mocanu} made improvements for both signatures assuming $K$ to be totally real.

In the preset article, we are going to work with signature $(4,2,p)$, where $p$ is an odd prime. Here we have a brief summary of the background for this case. When $A=C=1$, we have the equation
\begin{equation}\label{eq:B42p}
x^4+By^2=z^p.
\end{equation}
In this case the first asymptotic result was obtained by Ellenberg \cite{Ellenberg} for $B=1$ and $K=\Q$, proving that there are no non-trivial primitive solutions if $p\ge 211$. Then, a similar result over $\Q$ was proved for $B=2,3$ and $p\ge 349, 131$, respectively, by Dieulefait and Jim\'enez--Urroz \cite{DieulefaitJimenez}. In \cite{PacettiVillagra} and \cite{PacettiVillagra2}, Pacetti and the author gave a recipe (using the modular method) to get asymptotic results for $K=\Q$ and $B$ an arbitrary integer, and explicit bounds for $p$ were given when $B=5,6,7$. Nevertheless, one question still unanswered in general is the following:  given $B$, is there a bound $B'$ such that there are no non-trivial primitive solutions of~(\ref{eq:B42p}) for $p>B'$?. 

Note that a solution $(a,b,c)\in \ZZ^3$ of~(\ref{eq:B42p}) induces a solution $(a,\sqrt{-B}b,c)\in\Om_K^3$ of the equation
\begin{equation}\label{eq:42p}
x^4-y^2= z^p,
\end{equation}
where $K=\Q(\sqrt{-B})$ (the reader can also note that equation~(\ref{eq:42p}) is the same as (\ref{eq:general}) with $A=B=C=1$ and signature $(4,p,2)$, if $p$ is odd). Then, instead of studying equation~(\ref{eq:B42p}) over $\Q$ for different values of $B$, it would be enough to understand solutions of~(\ref{eq:42p}) over quadratic fields, or more generally over arbitrary number fields. This is the direction in which we focus. The goal of the present article is to give asymptotic results for equation~(\ref{eq:42p}) over a number field $K$.

\subsection{Main results}

Let $K$ be a number field and denote by $h_K^+$ its narrow class number. Let \[S_K=\{\id{P}: \id{P} \text{ is a prime of } K \text{ above } 2\}.\]
We denote by $T_K$ the subset of $S_K$ defined by 
\[T_K:=\left\{\id{P}\in S_K:|v_\id{P}(\alpha/\beta)|\le 6v_{\id{P}}(2) \text{ for every } (\alpha,\beta,\gamma)\in\Om_{S_K}^\times\times\Om_{S_K}^\times\times\Om_{S_K} \text{ satisfying } \alpha+\beta=\gamma^2\right\}. \] 
 The following are two important hypothesis that we will use:
\[\textbf{(}\textbf{H}_\textbf{1}\textbf{)}:	2\nmid h_K^+ \ \text{ and } \ \# S_K=1, \qquad \textbf{(}\textbf{H}_\textbf{2}\textbf{)}:C_{S_K}(K)[2]=\{1\} \  \text{ and } \ T_K\neq\emptyset. \]
We follow the notation used in \cite{mocanu}, where  $\Cl_S(K)$ means $\Cl(K)/\langle [\id{P}] \rangle_{\id{P}\in S}$ and $\Cl_S(K)[n]$ denotes the set of its $n$-torsion elements. When all $\id{P}\in S$ are principal, $\Cl_S(K)$ is just the class group $\Cl(K)$. In such a case, if $p$ is a prime number then the condition of $\Cl_S(K)[p]$ being trivial is just that $p\nmid h_K$. Moreover, if $2\nmid h_K^+$ then $\Cl_{S_K}(K)[p]=\{1\}$. We can now formulate our results.
	\begin{thm}\label{thm:main}
		Let $K$ be a number field satisfying \textbf{(}\textbf{H}$_{\textbf{1}}$\textbf{)} or \textbf{(}\textbf{H}$_\textbf{2}$\textbf{)}. If $K$ has at least one complex embedding, assume furthermore Conjectures~\ref{conj:modularity} and~\ref{conj:fakecurves}. Let $\id{P}$ the only prime in $S_K$ in case that \textbf{(}\textbf{H}$_\textbf{1}$\textbf{)} holds or be an element of $T_K$ in case that \textbf{(}\textbf{H}$_\textbf{2}$\textbf{)} holds. Then, there exists a bound $B_{K}$ (depending only on $K$) such that if $p>B_K$  the equation $x^4-y^2=z^p$ does not have non-trivial primitive  solutions $(a,b,c)\in\Om_K^3$ with $\id{P}\mid c$.
	\end{thm}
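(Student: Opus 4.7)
The plan is to run the modular method on a putative non-trivial primitive solution $(a,b,c)\in\Om_K^3$ with $\id{P}\mid c$. The starting point is the factorization
\[
(a^2-b)(a^2+b) = c^p,
\]
which, combined with primitivity, makes the two factors coprime away from $S_K$. Both hypotheses $(\mathbf{H}_1)$ and $(\mathbf{H}_2)$ force $\Cl_{S_K}(K)[p]=\{1\}$ once $p$ exceeds the exponent of $\Cl_{S_K}(K)$, so the ideal identity $(a^2-b)(a^2+b)=(c)^p$ promotes to element-level equalities $a^2-b=\alpha u^p$ and $a^2+b=\beta v^p$ with $\alpha,\beta\in\Om_{S_K}^\times$ and $u,v\in\Om_{S_K}$. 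Adding these produces $\alpha u^p+\beta v^p=2a^2$, an $S$-unit identity of exactly the shape that triggers the definition of $T_K$ after normalising the right-hand side to a square.

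To this data I associate the Frey elliptic curve
\[
\EE : Y^2 = X^3 + 2bX^2 - c^p\,X,
\]
whose discriminant is $64\,a^4 c^{2p}$ and whose full $2$-torsion is $K$-rational. At every prime $\qq\notin S_K$ with $\qq\mid c$ the Tate-curve argument gives $p\mid v_{\qq}(\Delta(\EE))$, so $\overline{\rho}_{\EE,p}$ is unramified at $\qq$; the contribution of primes dividing $b$ but not $c$ is controlled after passing to a minimal model. At the chosen $\id{P}\in S_K$ the bound $|v_\id{P}(\alpha/\beta)|\le 6 v_\id{P}(2)$ furnished by $T_K\ne\emptyset$ (or, under $(\mathbf{H}_1)$, simply the uniqueness of $\id{P}$) bounds the conductor exponent of a minimal model of $\EE$ at $\id{P}$ by a constant depending only on $K$. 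Consequently, the Artin conductor of $\overline{\rho}_{\EE,p}$ lies in a finite set of ideals of $\Om_K$ depending only on $K$.

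Granting Conjecture~\ref{conj:modularity} for $\EE$ and establishing irreducibility of $\overline{\rho}_{\EE,p}$ for $p$ large — a step which over number fields replaces Mazur's theorem by a Merel-type input together with the presence of full $2$-torsion — Ribet-style level lowering over $K$ produces an automorphic eigenform $f$ of parallel weight $2$ lying in a finite set $\mathcal{F}$ depending only on $K$, with $\overline{\rho}_{\EE,p}\simeq\overline{\rho}_{f,\id{p}}$ for some prime $\id{p}\mid p$ of the Hecke field of $f$. Conjecture~\ref{conj:fakecurves} eliminates those eigenforms in $\mathcal{F}$ that do not arise from a genuine elliptic curve over $K$. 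For each of the finitely many remaining $f$, one chooses an auxiliary prime $\qq\nmid 2p$ of good reduction, enumerates the residues of $(a,b,c)$ modulo $\qq$, and compares $a_\qq(\EE)\pmod{\id{p}}$ with $a_\qq(f)$; a norm bound on the resulting congruence forces $p$ into a finite explicit set, and taking $B_K$ larger than all these primes concludes the proof.

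The main obstacle I expect is the local analysis at $\id{P}\in S_K$: translating the $S$-unit bound $|v_\id{P}(\alpha/\beta)|\le 6 v_\id{P}(2)$ defining $T_K$ into a uniform bound on the conductor exponent of a minimal model of $\EE$ at $\id{P}$, while tracking the various additive-reduction subcases that appear when $v_\id{P}(2)$ is large, is exactly where the hypotheses $(\mathbf{H}_1)$ and $(\mathbf{H}_2)$ do real work and where the argument genuinely departs from its classical $\Q$-counterpart. A secondary obstacle is the irreducibility of $\overline{\rho}_{\EE,p}$ for large $p$: over arbitrary number fields this requires more than a cyclotomic-character computation, and is typically the source of any non-effectivity in $B_K$ unless modularity of elliptic curves over $K$ is known unconditionally, as in the real-quadratic or cyclotomic $\Z_2$-extension cases highlighted in the introduction.
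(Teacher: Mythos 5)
There are two genuine gaps, both fatal as written. First, your Frey curve $Y^2=X^3+2bX^2-c^pX$ has discriminant $64a^4c^{2p}$, so at a prime $\qq\notin S_K$ with $\qq\mid a$ (and hence $\qq\nmid bc$ by primitivity) one finds $v_\qq(c_4)=0$ and $v_\qq(\Delta)=4v_\qq(a)$: the curve has multiplicative reduction at $\qq$ with $p\nmid v_\qq(\Delta_\qq)$ in general, so these primes survive level lowering and the level of the resulting eigenform is \emph{not} bounded in terms of $K$ alone. (You worry about primes dividing $b$, but those do not even appear in the discriminant; the problem is the primes dividing $a$.) The paper uses instead $\E:y^2=x^3+4ax^2+2(a^2+b)x$, with $\Disc_{\E}=2^9(a^2+b)c^p$; the factorization $(a^2+b)(a^2-b)=c^p$ together with primitivity forces $p\mid v_\qq(a^2+b)$, hence $p\mid v_\qq(\Disc_{\E})$, for \emph{every} $\qq\notin S_K$, which is exactly the property your curve lacks and which makes the post-level-lowering level supported on $S_K$ and bounded by $2+6v_\id{P}(2)$ (Ogg's formula --- no hypothesis on $K$ is needed for this, contrary to your suggestion that $(\mathbf{H}_1)$/$(\mathbf{H}_2)$ do the work here).

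Second, your endgame cannot close. The auxiliary-prime comparison of $a_\qq(\EE)\bmod p$ with $a_\qq(f)$ is the Kraus trick: it eliminates a \emph{fixed} exponent $p$ by a computation depending on $p$, and it provably cannot eliminate, uniformly in $p$, an eigenform $f$ whose associated elliptic curve $E_f$ itself has a $K$-rational point of order $2$, good reduction outside $S_K$ and potentially multiplicative reduction at $\id{P}$ --- for such an $E_f$ the congruences you test could hold identically. The paper's contradiction is of a different nature: it shows that \emph{no elliptic curve over $K$ with those three properties exists}, via Freitas--Kraus--Siksek (Theorem~\ref{thm:FKS}, with $\ell=2$) under $(\mathbf{H}_1)$, or via Mocanu's $S$-unit argument (Proposition~\ref{prop:mocanu}) under $(\mathbf{H}_2)$. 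This is the only place the hypotheses enter, and the $S$-unit equation $\alpha+\beta=\gamma^2$ in the definition of $T_K$ arises from the $2$-torsion structure of the hypothetical $E_f$, not from a descent on the Fermat solution $(a,b,c)$ as in your first paragraph (which, incidentally, only uses that $p$ exceeds the exponent of the class group and so does not actually invoke $(\mathbf{H}_1)$ or $(\mathbf{H}_2)$). You would also need the observation that $\id{P}\mid c$ lets one normalise $v_\id{P}(a^2+b)=v_\id{P}(2)$, giving $v_\id{P}(j_{\E})<0$ and $p\mid\#\overline{\rho}_{\E,p}(I_\id{P})$, which is what both the irreducibility argument and the transfer of potentially multiplicative reduction to $E_f$ rest on.
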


\begin{remark}
	The condition  $\id{P}\mid c$ for a putative solution $(a,b,c)$ is necessary. Otherwise Theorem~\ref{thm:main} would be false, since $K=\Q(\sqrt{2})$ satisfies \textbf{(}\textbf{H}$_\textbf{1}$\textbf{)} but $(\pm1,\pm \sqrt{2},-1)$ is a non-trivial primitive solution for every $p>2$. The solution $(\pm1,\pm \sqrt{2},-1)$ is non-trivial according to our definition, but it is special in the sense that it is solution of~(\ref{eq:42p}) for every odd prime exponent $p$. It is clear that this phenomenon is possible just if $c\in\{0,\pm1\}$. Note that there are also non-trivial primitive solutions with $c\neq1$ odd and $p$ relatively large (i.e. $1/4+1/2+1/p<1$). For instance over $K=\Q$ we have
	\[11^4-122^2=(-3)^5.\]
\end{remark}


We say that the constant $B_K$ it is \textit{effective} when is effectively computable. We have important consequences when we restrict our attention to particular families of totally real number fields, such as some quadratic extensions and the $r$-th layers of the cyclotomic $\Z_2$-extension $\Q_{r,2}:=\Q(\zeta_{2^{r+2}})^+$. The effectivity of the bound in these cases will follow from modularity of elliptic curves over real quadratic fields \cite{FHS} for the first family, and for the extensions $\Q_{r,2}$ by the more recent work of Thorne \cite{Thorne} in which  modularity of elliptic curves over $\Q_\infty$ is proven.

\begin{coro}\label{coro:quadraticase}
	Let $d>0$  be a rational prime satisfying $d \equiv 5 \pmod 8$.  Then, there is an effective constant $B_d$  such that
	for any rational prime $p > B_d$, the equation a
	$x^4-y^2=z^p$ does not have
non-trivial primitive
	solutions $(a, b, c) \in \Om_{\Q(\sqrt{d})}^3$ with $2\mid c$.
\end{coro}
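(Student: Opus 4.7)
The strategy is to deduce the corollary from Theorem~\ref{thm:main} by checking that $K=\Q(\sqrt{d})$ satisfies hypothesis $(H_1)$, and then to promote the resulting bound from ineffective to effective by invoking modularity of elliptic curves over real quadratic fields~\cite{FHS}. Because $d>0$, the field $K$ is totally real, so it has no complex embedding and Conjectures~\ref{conj:modularity} and~\ref{conj:fakecurves} are not needed.

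First I would verify the two parts of $(H_1)$. Since $d$ is a prime with $d\equiv 5\pmod 8$, the ring of integers is $\Om_K=\Z[(1+\sqrt{d})/2]$, and the minimal polynomial of $(1+\sqrt{d})/2$, namely $x^2-x+(1-d)/4$, reduces modulo $2$ to $x^2+x+1$, which is irreducible over $\F_2$; hence $2$ is inert in $K$ and $\#S_K=1$. For the narrow class number, $d$ is a prime $\equiv 1\pmod 4$, so the discriminant $\Disc_K=d$ has exactly one rational prime divisor, and genus theory gives that the $2$-rank of the narrow class group is $1-1=0$, so $h_K^+$ is odd. Both conditions of $(H_1)$ holding, Theorem~\ref{thm:main} yields a bound $B_K$ such that for every prime $p>B_K$ there is no non-trivial primitive solution $(a,b,c)\in\Om_K^3$ with $\id{P}\mid c$, where $\id{P}$ is the unique prime above $2$; and since $\id{P}=(2)\Om_K$, the divisibility $\id{P}\mid c$ coincides with $2\mid c$.

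The main obstacle is effectivity of $B_K$, since Theorem~\ref{thm:main} is, in the generality stated there, ineffective. In the modular method, one attaches to a putative solution a Frey elliptic curve over $K$, and uses modularity plus level-lowering to obtain a congruence between the mod-$p$ Galois representation of this curve and that of a Hilbert modular newform drawn from an explicit finite list of bounded level. When modularity of elliptic curves over $K$ is known unconditionally, as it is for real quadratic fields by Freitas, Hung and Siksek~\cite{FHS}, one can enumerate this finite list, compute Hecke eigenvalues of the candidate newforms at small auxiliary primes, and compare them with the possible traces of Frobenius on the Frey curve at primes where its reduction is controlled; this either eliminates each candidate or yields an explicit numerical bound on~$p$. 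The combined procedure converts the ineffective $B_K$ of Theorem~\ref{thm:main} into an effective constant $B_d$ depending only on $d$.
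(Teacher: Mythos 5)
Your proof is correct and follows the paper's own route: verify $(H_1)$ for $K=\Q(\sqrt d)$ (the paper cites Lemmermeyer for the oddness of $h_K^+$, which is exactly the genus-theory computation you spell out, and the inertness of $2$ you check directly), apply Theorem~\ref{thm:main} with $\id{P}=(2)$, and derive effectivity from modularity of elliptic curves over real quadratic fields~\cite{FHS}. One small imprecision in your last paragraph: in this paper the sole source of ineffectivity in the totally real case is Faltings' theorem entering through Corollary~\ref{coro:modularity} (the finitely many possibly non-modular $\overline{K}$-isomorphism classes), so once modularity is known unconditionally no newform-elimination computation is required --- the remaining constants are already effective and the final contradiction is supplied by Theorem~\ref{thm:FKS} rather than by comparing Hecke eigenvalues.
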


\begin{coro}\label{thm:Q2} Let $\id{P}$ the only prime of $\Q_{r,2}$ above $2$. Then, there exists an effective bound $B_{r}$ such that if $p>B_{r}$ then the equation $x^4-y^2=z^p$ does not have non-trivial primitive solutions $(a,b,c)\in\Om_{\Q_{r,2}}^3$ such that $\id{P}\mid c$.
\end{coro}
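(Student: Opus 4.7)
The plan is to deduce Corollary~\ref{thm:Q2} from Theorem~\ref{thm:main} by verifying hypothesis $(\mathbf{H}_1)$ for $K = \Q_{r,2}$ and then arguing that the resulting bound is effective. The first component, $\#S_K = 1$, is immediate: the prime $2$ is totally ramified in $\Q(\zeta_{2^{r+2}})/\Q$, hence also in the maximal totally real subfield $\Q_{r,2}$, so there is a unique prime $\id{P}$ above $2$. The second component, $2 \nmid h_K^+$, is a classical fact about the cyclotomic $\Z_2$-extension of $\Q$: the $2$-part of the wide class number $h_{\Q_{r,2}}$ is trivial for every $r$ (going back to Weber, and recovered Iwasawa-theoretically from the vanishing of the $\mu$- and $\lambda$-invariants of the cyclotomic $\Z_2$-extension of $\Q$), and cyclotomic units realise every sign pattern at the real places of $\Q_{r,2}$, so $h_{\Q_{r,2}}^+ = h_{\Q_{r,2}}$ is also odd.

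Since $\Q_{r,2}$ is totally real, the conjectures invoked in Theorem~\ref{thm:main} for fields with a complex embedding play no role here, and the theorem immediately produces a bound $B_r$ such that $x^4 - y^2 = z^p$ has no non-trivial primitive solution $(a,b,c) \in \Om_{\Q_{r,2}}^3$ with $\id{P} \mid c$ as soon as $p > B_r$. What remains is to argue that $B_r$ may be chosen effectively.

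This last step is the substantive point and, in principle, the main obstacle. Inspecting the proof of Theorem~\ref{thm:main} one sees that effectivity of the bound reduces to modularity of elliptic curves over $K$: modularity is precisely what allows one to attach to the Frey curve of a putative solution a Hilbert newform of a computable fixed level, after which standard level lowering and an explicit comparison of Hecke eigenvalues give an effective bound on $p$. For $K = \Q_{r,2}$ this modularity is supplied by Thorne's theorem~\cite{Thorne}, which establishes modularity of every elliptic curve over $\Q_\infty$ and therefore over each finite layer $\Q_{r,2}$. This removes the remaining obstruction, turns $B_r$ into an effectively computable constant, and completes the proof of the corollary.
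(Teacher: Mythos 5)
Your proposal is correct and follows essentially the same route as the paper: verify $(\mathbf{H}_1)$ for $\Q_{r,2}$ (unique prime above $2$ by total ramification, odd narrow class number as in Weber/Iwasawa), apply Theorem~\ref{thm:main}, and obtain effectivity from Thorne's modularity theorem over the cyclotomic $\Z_2$-extension, which removes the ineffective Faltings input in Corollary~\ref{coro:modularity}. The paper's own proof is just a terser version of exactly this argument.
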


\subsection{The strategy} Let $K$ be a number field and let $\Gal_K:=\Gal(\overline{\Q}/K)$ its absolute Galois group. If $E$ is an elliptic curve over $K$ and $p$ a rational prime we denote by  
\[\rho_{E,p}:\Gal_K\to\GL_2(\ZZ_p)\]
the Galois representation obtained from the action of $\Gal_K$ on the $p$-adic Tate module $T_p(E)$. We denote by $\overline{\rho}_{E,p}$  its mod $p$ reduction. Let $\id{P}$ be a fixed prime in $K$ dividing $2$, i.e,  $\id{P}\in S_K$. The strategy of the modular method applied to this case can be summarized in the following steps.

\begin{enumerate}[(i)]
	\item \textbf{Frey curve:}  To a putative non-trivial primitive solution $(a,b,c)\in\Om_K^3$  of equation~(\ref{eq:42p}) with $\id{P}\mid c$ we will attach a Frey elliptic curve $\E$ over $K$ with a $K$-rational point of order $2$, semistable reduction outside $S_K$ and potentially multiplicative reduction at $\id{P}$.
	
	\item \textbf{Modularity and lowering the level:}  Assuming some hypothesis and results for elliptic curves over number fields and Galois representations, we will get an authomorphic form $\id{f}$ over $K$ which lies in a space that does not depend on the solution $(a,b,c)$ neither on $p$ such that 
	\[ \label{eq:sim} \tag{$\star$}
		\overline{\rho}_{\E,p}\sim\overline{\rho}_{\id{f},\varpi},  \]
	where $\varpi$ is some prime of $\Q_\id{f}$ dividing $p$.
	
	\item \textbf{Eichler-Shimura:} The idea of this step is to prove (assuming $p$ large enough) the existence of an elliptic curve $E_\id{f}$ over $K$ arising from $\id{f}$. Then, using~(\ref{eq:sim}) we can prove that $E_\id{f}$ has a $K$-rational point of order $2$, good reduction outside $S_K$ and potentially multiplicative reduction at $\id{P}$.
	\item \textbf{Contradiction:} Using hypothesis \textbf{(}\textbf{H}$_\textbf{1}$\textbf{)} or \textbf{(}\textbf{H}$_\textbf{2}$\textbf{)}, we can prove that there are no elliptic curves over $K$ with a $K$-rational point of order $2$, good reduction outside $S_K$ and potentially multiplicative reduction at $\id{P}$, getting a contradiction.
\end{enumerate}

\begin{remark}\label{rem:signature}
	The reason that makes signatures $(p,p,p)$, $(p,p,2)$ and $(p,p,3)$ special is that in every case there exists a rational prime $\ell$ ($\ell=2,2,3$ respectively) and a Frey elliptic curve $E$ over $K$ (attached to a certain type of solution over $\Om_{K}^3$) such that $E$ has potentially multiplicative reduction in a fixed prime above $\ell$, semistable reduction outside $\ell$ and a $K$-rational point of order $\ell$. To the best of the author's knowledge, signature $(4,2,p)$  is the only remaining case with such properties.
\end{remark}

The next section is devoted to the study of equation~(\ref{eq:42p}), following the above strategy. To make the aforementioned steps more accessible, each subsection is rendered as self-contained as possible, repeating the relevant material, specially from \cite{FS,SS}. At the end of the section we will  prove the main results, and we will see how the same approach can be used to get new asymptotic results for signature $(p,p,2)$.

\subsection*{Acknowledgments}
I am very grateful to Ariel Pacetti for his support, many useful conversations and for helpful comments to improve this article. I also thank Diana Mocanu for the useful discussions and for the reading of this paper. Finally, I would like to thank Nuno Freitas, for the careful reading and helpful remarks.

\section{The equation $x^4-y^2=z^p$}\label{section:equation}

\subsection{Frey curve and related facts}
	For $(a,b,c)\in\Om_K^3$ solution of~(\ref{eq:42p}) we associate the Frey elliptic curve
	\begin{equation}\label{eq:freycurve}
\E:y^2=x^3+4ax^2+2(a^2+b)x,
	\end{equation}
	defined over $K$. Note that $(0,0)$ is a $K$-rational point of order $2$. Its discriminant equals $\Disc_{\E} = 2^9(a^2+b)c^p$ and its
	$j$-invariant equals
	$j_{\E}=\frac{2^6(5a^2-3b)^3}{ c^p(a^2+b)}$.  We denote by $\id{N}_{\E}$ its conductor. As was mentioned in the introduction, we only want to consider primitive solutions of~(\ref{eq:42p}), since we know that there are finitely many. Lemma~\ref{lemma:infinitely} below proves that there are infinitely many solutions removing this hypothesis. Then, from now on we will assume $(a,b,c)$ to be primitive.
	
	\begin{lemma}\label{lemma:infinitely}
		Let $p>3$ be a rational prime. Then equation~(\ref{eq:42p}) has infinitely non-primitive solutions.
	\end{lemma}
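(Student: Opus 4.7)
The plan is to exhibit an explicit infinite family of non-primitive solutions obtained by scaling a trivial base solution. The key observation is a weighted homogeneity of the equation: if $(\alpha,\beta,\gamma)\in\Om_K^3$ satisfies $\alpha^4-\beta^2=\gamma^p$, then for every $t\in\Om_K$ the triple $(t^p\alpha,\,t^{2p}\beta,\,t^4\gamma)$ is again a solution, since
\[
(t^p\alpha)^4-(t^{2p}\beta)^2 \;=\; t^{4p}(\alpha^4-\beta^2) \;=\; t^{4p}\gamma^p \;=\; (t^4\gamma)^p.
\]
Applied to the trivial base solution $(\alpha,\beta,\gamma)=(1,0,1)$, this produces the family $(t^p,\,0,\,t^4)$ parametrised by $t\in\Om_K$, every member of which solves equation~(\ref{eq:42p}).

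To conclude, I would check that this family contains infinitely many distinct triples and that none of them is primitive. Letting $t$ run over the rational integers $n\ge 2$ (all of which lie in $\Om_K$), the first coordinates $n^p$ are pairwise distinct, so the triples are pairwise distinct. For any such non-unit $t$, the first and third coordinates $t^p$ and $t^4$ lie in the proper ideal $(t)\subsetneq\Om_K$, so they are not coprime and the triple $(t^p,0,t^4)$ fails to be primitive.

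There is essentially no technical obstacle here; the construction is elementary. I would remark only that the hypothesis $p>3$ plays no role in the construction itself and is inherited from the Darmon--Granville finiteness theorem \cite{Darmon1995}, which requires $1/4+1/2+1/p<1$; the purpose of the present lemma is precisely to show that the primitivity assumption in that finiteness statement cannot be dropped. If one instead wishes to produce non-trivial non-primitive solutions, the same scaling can be applied to any non-trivial base solution (for instance $(3,7,2)$ when $p=5$ and $K=\Q$), but this is unnecessary for the statement as written.
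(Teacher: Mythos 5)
Your proof is correct for the statement as literally written, and it rests on the same underlying idea as the paper's proof: the weighted homogeneity $(\alpha,\beta,\gamma)\mapsto(t^p\alpha,t^{2p}\beta,t^4\gamma)$. The difference is in what gets scaled. You scale the trivial seed $(1,0,1)$, so your explicit family $(t^p,0,t^4)$ consists entirely of trivial solutions ($y=0$, so $abc=0$). The paper instead starts from an \emph{arbitrary} pair $u,v\in\Om_K$ with $r=u^4-v^2\neq\pm1$ and absorbs the ``defect'' $r$ by multiplying $u$ and $v$ by suitable powers of $r$ (with a case split on $p\bmod 4$ to keep the exponents integral), producing $(ur^{(p-1)/4},vr^{(p-1)/2},r)$ or $(ur^{(3p-1)/4},vr^{(3p-1)/2},r^3)$; for $uv\neq0$ these are non-trivial. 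Since the main theorem already excludes trivial solutions, the real point of the lemma --- that primitivity cannot be dropped from the finiteness statement --- requires infinitely many \emph{non-trivial} non-primitive solutions, and your main family does not deliver that. You do flag this and propose scaling a non-trivial base solution, but your example $(3,7,2)$ only works for $p=5$ over $\Q$, and producing a non-trivial base solution for arbitrary $p$ and $K$ is precisely the step your construction leaves open; the paper's $r=u^4-v^2$ device solves it without needing any pre-existing solution. So: correct for the literal statement, but the paper's variant is the one that carries the intended content.
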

\begin{proof}
	Let $u,v\in\Om_K$ arbitrary elements such that $r=u^4-v^2$ is not equal to $\pm 1$. If $p\equiv1\pmod 4$ then $(ur^{\frac{p-1}{4}},vr^{\frac{p-1}{2}},r)$ is solution of~(\ref{eq:42p}). If $p\equiv3\pmod4$ then $(ur^{\frac{3p-1}{4}},vr^{\frac{3p-1}{2}},r^3)$ is solution of~(\ref{eq:42p}).
\end{proof}
	\begin{remark}\label{rem:valuation2}
		Fix $\id{P}\in S_K$. Let $(a,b,c)\in\Om_K^3$ a primitive solution of~(\ref{eq:42p}), where $p>2v_\id{P}(2)$ and $\id{P}\mid c$. Since $(a^2+b)(a^2-b)=c^p$ and $a^2+b=(a^2-b)+2b$ then $\id{P}\mid \gcd(a^2+b,a^2-b)$. Moreover, if  $v_\id{P}(a^2-b)>v_\id{P}(2)$ then $v_\id{P}(a^2+b)=v_\id{P}(2)$, since $\id{P}\nmid b$. On the other hand, if $v_\id{P}(a^2-b)\le v_\id{P}(2)$ we have that  $v_\id{P}(a^2+b)\ge v_\id{P}(a^2-b)$. Thus,  $v_\id{P}(a^2+b) > v_\id{P}(2)$, since otherwise we would have
		\[p\le pv_\id{P}(c)\le 2v_\id{P}(a^2+b) \le 2v_\id{P}(2).\]
		Hence, $\min \{v_\id{P}(a^2+b), v_\id{P}(a^2-b)\} > v_\id{P}(2)$. Then, interchanging $b$ by $-b$ if necessary, we can assume $v_\id{P}(a^2+b)=v_\id{P}(2)$ and $v_\id{P}(a^2-b)=  pv_\id{P}(c) - v_\id{P}(2)$. 
	\end{remark}
	
	
	Let $K$ be a number field, $E/K$ an elliptic curve of conductor $\id{N}_E$ and $p$ a rational prime. Define
	\[\id{M}_p=\prod_{\substack{\id{q}\mid\mid\id{N}_E\\ p\mid v_\id{q}(\Delta_\id{q})}}\id{q}, \qquad \id{N}_p=\frac{\id{N}_E}{\id{M}_p},\]
	where $\Delta_\id{q}$ is the discriminant of a local minimal model of $E$ at $\id{q}$. 
	\begin{lemma}\label{lemma:condutor}
		For all primes $\id{q}\notin S_K$, the model of $\E$ given in~(\ref{eq:freycurve}) is minimal, semistable and satisfies $p\mid v_\id{q}(\Delta_{\E})$. The representation  $\overline{\rho}_{\E,p}$ is odd, its determinant is the cyclotomic character and is finite flat at every prime $\id{p}$ of $K$ dividing $p$. Moreover,
		\begin{equation*}
			\id{N}_{\E}=\prod_{\id{P}\in S_K}\id{P}^{r_\id{P}}\prod_{\substack{\id{q}\mid c, \\ \id{q}\notin S_K}}\id{q}, \qquad \id{N}_p=\prod_{\id{P}\in S_K}\id{P}^{r'_\id{P}},
		\end{equation*}
	where $0 \le r'_\id{P} \le r_\id{P} \le 	2+6v_\id{P}(2)$.
	\end{lemma}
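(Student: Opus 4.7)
The plan is to analyze the local reduction of the model~(\ref{eq:freycurve}) at each prime $\id{q} \notin S_K$, then extract the representation-theoretic consequences, and finally collect the conductor information. The core local step relies on the factorisation $(a^2+b)(a^2-b)=c^p$ together with primitivity: at any odd prime $\id{q}$, a common divisor of $a^2+b$ and $a^2-b$ would divide $2a^2$ and $2b$, hence $a$ and $b$ when $\id{q}$ is odd, and then also $c$, contradicting primitivity. Thus if $\id{q} \nmid c$ then $v_\id{q}(a^2\pm b)=0$ and $v_\id{q}(\Disc_{\E})=0$, so the model has good reduction and is minimal; if $\id{q} \mid c$, exactly one of $a^2 \pm b$ is divisible by $\id{q}$ with $\id{q}$-valuation $p\,v_\id{q}(c)$, giving $v_\id{q}(\Disc_{\E}) \in \{p\,v_\id{q}(c),\,2p\,v_\id{q}(c)\}$, in particular divisible by $p$. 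To confirm minimality and multiplicative reduction in this second case, I would compute $c_4(\E)=32(5a^2-3b)$ directly and note that $a^2 \equiv \pm b \pmod{\id{q}}$ makes $5a^2-3b$ congruent to $2a^2$ or $8a^2$ mod $\id{q}$; primitivity prevents $\id{q} \mid a$ (since $\id{q} \mid a$ together with $\id{q} \mid c$ would force $\id{q} \mid b$), so $v_\id{q}(c_4)=0$, which certifies minimality and multiplicative reduction at $\id{q}$.

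The representation-theoretic assertions are then standard. The Weil pairing on $\E[p]$ identifies $\det \overline{\rho}_{\E,p}$ with the mod-$p$ reduction of $\chic$, and oddness follows since complex conjugation acts with cyclotomic value $-1$. For finite flatness at $\id{p} \mid p$, oddness of $p$ forces $\id{p} \notin S_K$, so the preceding analysis yields either good reduction at $\id{p}$ or multiplicative reduction with $p \mid v_{\id{p}}(\Disc_{\E})$; both conditions are well known to produce a finite flat restriction of $\overline{\rho}_{\E,p}$ to $G_{K_{\id{p}}}$, the multiplicative case via the Tate uniformisation.

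Assembling the conductor: the exponent at $\id{q} \notin S_K$ is $0$ if $\id{q} \nmid c$ and $1$ if $\id{q} \mid c$, which gives the prime-to-$S_K$ part of $\id{N}_{\E}$; at $\id{P} \in S_K$ I would invoke the Brumer--Kramer bound $v_{\id{P}}(\id{N}_{\E}) \le 2 + 6v_{\id{P}}(2)$. Every prime $\id{q} \notin S_K$ with $\id{q} \mid c$ satisfies $\id{q}\,\|\,\id{N}_{\E}$ and $p \mid v_\id{q}(\Disc_{\E})$, so divides $\id{M}_p$ and disappears in $\id{N}_p$; what remains is supported on $S_K$, with $r'_{\id{P}} \le r_{\id{P}}$ immediate from $\id{N}_p \mid \id{N}_{\E}$. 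The only external input I would cite rather than reprove is the Brumer--Kramer bound on conductor exponents above $2$; everything else is direct local computation, so the main bookkeeping effort is just organising the cases cleanly.
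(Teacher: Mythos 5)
Your proposal is correct and follows essentially the same route as the paper: the same factorisation/primitivity argument at primes outside $S_K$, the same observation that $c_4=32(5a^2-3b)$ is a unit there (the paper phrases this as $\gcd(5a^2-3b,c)$ being supported on $S_K$ and $v_\id{q}(j_{\E})<0$), the Weil pairing for the determinant and oddness, and the standard $v_{\id{P}}(\id{N})\le 2+6v_{\id{P}}(2)$ bound above $2$ (cited from Silverman IV.10.4 in the paper, Brumer--Kramer in yours). The bookkeeping for $\id{N}_p$ is likewise identical.
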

\begin{proof}
	Let $(a,b,c)$ a primitive solution of~(\ref{eq:42p}). From the discriminant formula, clearly
	$v_{\id{q}}(\Disc_{\E}) = pv_{\id{q}}(c) +
	v_{\id{q}}(a^2+b)$.  If
	$\id{q} \mid \gcd(a^2+b,a^2-b)$ then
	$\id{q} \mid 2$ (because $(a,b,c)$ is primitive), hence the equality
	$c^p = a^4-b^2=(a^2+b)(a^2-b)$ implies $p\mid v_\id{q}(a^2+b)$ for all $\id{q}\notin S_K$. Moreover,  $\gcd(5a^2-3b,c)$ is supported on $S_K$. Thus, if $\id{q}\not\in S_K$ is a prime dividing $\Delta_{\E}$ then $v_\id{P}(j_{\E})<0$. Hence, the given model is minimal at $\id{q}$ and $\E$ is semistable away from $S_K$. Furthermore, \[v_\id{q}(\Delta_\id{q})=v_\id{q}(\Delta_{\E})=pv_\id{q}(c)+v_\id{q}(a^2+b),\] so $v_\id{q}(\Delta_\id{q})$ is divisible by $p$. It follows from \cite{Serre} that $\overline{\rho}_{\E,p}$ is unramified at $\id{q}$ if $\id{q}\nmid p$ and is finite flat at every $\id{p}$ dividing $p$. On the other hand, if $\id{P}\in S_K$ then, by \cite[Theorem IV.10.4]{SilvermanAdv}, $r_\id{P}=v_\id{P}(\id{N}_{\E})\le 2+6v_\id{P}(2)$. The formula for $\id{N}_p$ follows easily from its definition. The statement about the determinant is a well-known consequence of the
	theory of the Weil pairing on $\E[p]$. This immediately implies oddness.
\end{proof}

\begin{remark}\label{rem:serreconductor}
	Note that the Serre conductor  of $\overline{\rho}_{\E,p}$ (prime-to-$p$ part of its Artin conductor) is supported on $S_K$, and belongs to a finite set depending only on $K$.
\end{remark}
To ease notation, and considering Remark~\ref{rem:valuation2},  we define for $\id{P}\in S_K$ the set \[W_\id{P}=\{(a,b,c): (a,b,c)\in\Om_K^3 \text{ is a non-trivial primitive solution of } (\ref{eq:42p}) \text{ such that } v_\id{P}(a^2+b)=v_\id{P}(2)\}.\]

\begin{lemma}\label{lemma:2ismultiplicative} Let $\id{P}$ be a prime of $K$ above $2$ and let $p>8v_\id{P}(2)$. If $(a,b,c)\in W_\id{P}$ then $\E$ has potentially multiplicative reduction at $\id{P}$ and $p\mid \#\overline{\rho}_{\E,p}(I_{\id{P}})$. 
\end{lemma}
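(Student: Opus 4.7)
The plan is to compute $v_\id{P}(j_\E)$ explicitly and show it is negative (to get potentially multiplicative reduction), then invoke the Tate curve description to deduce that $p$ divides the order of the inertia image.

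First, I would compute $v_\id{P}(j_\E)$ using the formula $j_\E = \frac{2^6(5a^2-3b)^3}{c^p(a^2+b)}$. By hypothesis $(a,b,c) \in W_\id{P}$ we already have $v_\id{P}(a^2+b) = v_\id{P}(2)$. To handle $5a^2 - 3b$, I would use the identity $5a^2 - 3b = 5(a^2+b) - 8b$. Since $(a,b,c)$ is primitive and $\id{P}\mid c$, we have $\id{P}\nmid b$, so $v_\id{P}(8b) = 3v_\id{P}(2)$, whereas $v_\id{P}(5(a^2+b)) = v_\id{P}(2)$. The two valuations differ, so $v_\id{P}(5a^2-3b) = v_\id{P}(2)$. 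Substituting gives
\[
v_\id{P}(j_\E) = 6v_\id{P}(2) + 3v_\id{P}(2) - p\,v_\id{P}(c) - v_\id{P}(2) = 8v_\id{P}(2) - p\,v_\id{P}(c).
\]
Since $v_\id{P}(c)\ge 1$ and $p > 8v_\id{P}(2)$, this is strictly negative, which proves that $\E$ has potentially multiplicative reduction at $\id{P}$.

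For the second assertion, since $p$ is odd and in particular coprime to the residue characteristic at $\id{P}$ (which divides $2$), I would invoke the standard description via the Tate parametrisation. After a (possibly quadratic unramified or ramified) twist making the reduction multiplicative, $\overline{\rho}_{\E,p}|_{I_\id{P}}$ becomes conjugate to a matrix of the form $\begin{pmatrix} 1 & * \\ 0 & 1\end{pmatrix}$ whose order is exactly $p$ precisely when $p\nmid v_\id{P}(j_\E)$, and the twist alters this image only by a character of order dividing $2$. So it suffices to check that $p\nmid v_\id{P}(j_\E) = 8v_\id{P}(2) - p\,v_\id{P}(c)$, equivalently $p\nmid 8v_\id{P}(2)$. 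This is automatic since $0 < 8v_\id{P}(2) < p$, so $p$ divides $\#\overline{\rho}_{\E,p}(I_\id{P})$.

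The only subtle point is the last step: one has to be sure that the quadratic twist accounting for the ``potentially'' (as opposed to genuinely) multiplicative nature of the reduction does not kill the contribution of order $p$. This is exactly the content of the standard results on Tate curves with $\id{P}\nmid p$ used in this kind of argument (compare, for example, the analogous statements in \cite{FS, SS, mocanu}), so I would cite those rather than reproduce the Tate-curve computation in full.
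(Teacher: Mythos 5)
Your proposal is correct and follows essentially the same route as the paper: the same identity $5a^2-3b=5(a^2+b)-8b$ together with $\id{P}\nmid ab$ gives $v_\id{P}(5a^2-3b)=v_\id{P}(2)$, hence $v_\id{P}(j_{\E})=8v_\id{P}(2)-p\,v_\id{P}(c)<0$, and the inertia statement is exactly the content of the cited Tate-curve lemma (\cite[Lemma 5.1]{SS}), applied via $p\nmid v_\id{P}(j_{\E})$ since $0<8v_\id{P}(2)<p$. No gaps.
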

\begin{proof}
	Since $\id{P}\nmid ab$ and $5a^2-3b=5(a^2+b)-8b$ then we have $v_{\id{P}}(5a^2-3b)=v_{\id{P}}(2)$.  The statement about the Galois representation follows from \cite[Lemma 5.1]{SS}  and the fact that
	\[v_{\id{P}}(j_{\E})=8v_{\id{P}}(2)-pv_{\id{P}}(c).\]
\end{proof}

\subsection{Modularity}
If $E$ is an elliptic curve over a number field $K$, we say that $E$ is modular if there is an isomorphism of compatible systems of Galois representations 
\begin{equation}\label{eq:form}
	\rho_{E,p}\sim\rho_{\id{f},\varpi},
\end{equation}
where  $\id{f}$ is an authomorphic form over $K$ of weight $2$ with coefficient field $\Q_\id{f}$ and $\varpi$ is a prime in $\Q_\id{f}$ above $p$. 
\subsubsection{Totally real case}
In the totally real case, $\id{f}$ comes from a Hilbert eigenform of parallel weight $2$ over $K$. We have the following result due to  Freitas, Le Hung and Siksek.
\begin{thm}[\cite{FHS}, Theorems 1 and 5]\label{thm:FHS}
	Let $K$ be a totally real field. There are at most finitely many $\overline{K}$-isomorphism classes of non-modular elliptic curves $E$ over $K$. Moreover, if $K$	is real quadratic, then all elliptic curves over $K$ are modular.
\end{thm}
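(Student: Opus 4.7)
The plan is to combine modularity lifting theorems for Hilbert modular forms over totally real fields with a $3$--$5$--$7$ prime-switching argument, reducing the theorem to the classification of $\overline{K}$-isomorphism classes of elliptic curves whose residual mod-$p$ representation has small image for every $p\in\{3,5,7\}$ simultaneously.

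First I would handle $p=3$. For any elliptic curve $E/K$ over a totally real field $K$, the residual representation $\overline{\rho}_{E,3}$ is automatically modular by the theorem of Langlands and Tunnell, since its image lies in the solvable group $\GL_2(\F_3)$. Provided $\overline{\rho}_{E,3}|_{G_{K(\zeta_3)}}$ is absolutely irreducible and the remaining local hypotheses at primes above $3$ hold (one invokes Kisin's potentially Barsotti--Tate lifting theorem at supersingular primes), one lifts modularity from $\overline{\rho}_{E,3}$ to $\rho_{E,3}$ via the Skinner--Wiles/Kisin-type modularity lifting theorems for totally real fields, proving $E$ modular.

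The remaining curves have $\overline{\rho}_{E,3}$ of small image, so they correspond to $K$-points on $X_0(3)$ or on $X_\mathrm{s}(3)$. For these I would switch to $p=5$: produce an auxiliary elliptic curve $E'/K$ with $E'[5]\simeq E[5]$ as Galois modules and with $\overline{\rho}_{E',3}$ of large image. Modularity of $E'$ follows from the previous step, the common mod-$5$ representation is therefore modular, and a lifting theorem at $p=5$ gives modularity of $E$. The auxiliary $E'$ is produced as a $K$-point on a suitable twist of the Hilbert modular surface parametrising pairs of elliptic curves with isomorphic $5$-torsion; this twist is rational over $K$, so the required point exists unconditionally. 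The analogous $5$--$7$ switch handles the case where both $\overline{\rho}_{E,3}$ and $\overline{\rho}_{E,5}$ have small image.

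After these three steps, the only curves that could fail to be modular are those whose residual representations are simultaneously of small image at $p=3,5,7$. These correspond to $K$-points on the fibre product of $X_\mathrm{s}(3)$, $X_\mathrm{s}(5)$ and $X_\mathrm{s}(7)$ inside the moduli stack of elliptic curves---a variety of general type. Finiteness of the resulting $\overline{K}$-isomorphism classes follows from Faltings-type boundedness, which gives the first assertion. For $K$ real quadratic the unconditional claim requires a finer input: using the explicit classification of quadratic points on modular curves such as $X_\mathrm{s}(7)$ and $X_0(15)$, one checks that every exceptional $\overline{K}$-class either has complex multiplication (and is therefore modular via the theory of CM Hilbert modular forms) or belongs to a small explicit list which one verifies modular directly. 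The main obstacle is this final explicit analysis of quadratic points on the exceptional modular curves, of which $X_\mathrm{s}(7)$ is the archetypal hard case; verifying the technical local hypotheses of the lifting theorems at primes above $p$ in the non-ordinary situation is a secondary but nontrivial difficulty.
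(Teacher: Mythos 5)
This statement is not proved in the paper at all: it is quoted verbatim as Theorems~1 and~5 of the cited work of Freitas, Le~Hung and Siksek and used as a black box, so there is no internal proof to compare against. Judged on its own terms, your sketch is a broadly faithful reconstruction of the actual strategy of that paper: Langlands--Tunnell plus modularity lifting at $p=3$, prime-switching to $5$ and $7$ when a residual image is small, reduction of the remaining curves to rational points on a finite list of modular curves, Faltings for the general totally real case, and explicit determination of quadratic points for the real quadratic case. Two points where your write-up glosses over the real content: (1) the auxiliary curve $E'$ in the $3$--$5$ switch is found as a point on a twist of the genus-zero modular curve $X(5)$ (not a Hilbert modular surface), and the existence of a point with $\overline{\rho}_{E',3}$ large is \emph{not} automatic from rationality --- it requires Hilbert irreducibility to avoid the thin locus of small mod-$3$ image while simultaneously arranging the local conditions at $3$ and $5$ needed for the lifting theorem; (2) the exceptional locus is not a single fibre product of split-Cartan curves but a finite list of curves $X(r_3,r_5,r_7)$ with each $r_p$ a Borel, normalizer-of-split-Cartan, or normalizer-of-nonsplit-Cartan condition, several of which have genus $0$ or $1$ and so cannot be dispatched by Faltings; the real quadratic case hinges on computing the quadratic points on a handful of these (including genus-one curves of rank zero and higher-genus curves attacked via quotients), which is where the bulk of the work in the cited paper lies. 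Neither issue invalidates the outline, but as written the proposal treats as routine precisely the steps that are hardest in the original proof.
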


\begin{coro}\label{coro:modularity}
	Let $K$ be a totally real field and let $\id{P}$ a fixed prime in $S_K$. There is some constant $A_K$ such that if $p>A_K$ and $(a,b,c)\in W_{\id{P}}$ then $\E/K$ is modular.
\end{coro}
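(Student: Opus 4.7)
The plan is to invoke the first half of Theorem~\ref{thm:FHS} and combine it with the explicit valuation formula $v_{\id{P}}(j_{\E}) = 8v_{\id{P}}(2) - p\,v_{\id{P}}(c)$ already observed in the proof of Lemma~\ref{lemma:2ismultiplicative}. The point is that the non-modular locus over $K$ provides only finitely many possibilities for $j_{\E}$, each with a fixed $\id{P}$-adic valuation, whereas for $(a,b,c)\in W_{\id{P}}$ the valuation $v_{\id{P}}(j_{\E})$ becomes arbitrarily negative as $p$ grows.

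More precisely, Theorem~\ref{thm:FHS} produces a finite set $J\subset K$ of $j$-invariants such that every non-modular elliptic curve $E/K$ satisfies $j_E\in J$ (using that the $\overline{K}$-isomorphism class of an elliptic curve is determined by its $j$-invariant, which lies in $K$ when $E$ is defined over $K$). On the other hand, the defining condition $v_{\id{P}}(a^2+b)=v_{\id{P}}(2)>0$ of $W_{\id{P}}$, together with the identity $(a^2+b)(a^2-b)=c^p$, forces $\id{P}\mid c$; combined with $v_{\id{P}}(5a^2-3b)=v_{\id{P}}(2)$ (already established in the proof of Lemma~\ref{lemma:2ismultiplicative}), the formula for $j_{\E}$ gives
\[
v_{\id{P}}(j_{\E}) = 8v_{\id{P}}(2) - p\,v_{\id{P}}(c) \le 8v_{\id{P}}(2) - p.
\]
In particular $j_{\E}\neq 0$, so if $\E$ were non-modular then $j_{\E}=j_i$ for some nonzero $j_i\in J$, which would force $p\le 8v_{\id{P}}(2)-v_{\id{P}}(j_i)$.

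It therefore suffices to define
\[
A_K := \max\bigl\{8v_{\id{P}}(2)-v_{\id{P}}(j_i)\,:\, j_i\in J\setminus\{0\}\bigr\},
\]
with the convention $A_K=0$ when $J\subseteq\{0\}$; note that the second half of Theorem~\ref{thm:FHS} yields $J=\emptyset$, so any $A_K$ works, whenever $K$ is real quadratic. For $p>A_K$ and $(a,b,c)\in W_{\id{P}}$ the valuation $v_{\id{P}}(j_{\E})$ is strictly smaller than $v_{\id{P}}(j_i)$ for every nonzero $j_i\in J$, hence $j_{\E}\notin J$ and $\E$ must be modular. There is nothing delicate in this argument beyond the input from Theorem~\ref{thm:FHS}: the whole content is that the Frey $j$-invariant's $\id{P}$-adic valuation escapes the finite non-modular exceptional locus as soon as $p$ is large enough, which is immediate from the explicit formula.
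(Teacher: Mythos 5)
Your proof is correct and follows essentially the same route as the paper: both invoke Theorem~\ref{thm:FHS} to reduce to a finite exceptional set of $j$-invariants and then observe that a $\id{P}$-adic valuation attached to the Frey curve grows linearly in $p$, forcing escape from that set. The only (harmless) difference is that you track $v_{\id{P}}(j_{\E})$ directly, whereas the paper passes through the parameter $\lambda=4(a^2-b)/(a^2+b)$ and bounds $v_{\id{P}}(\lambda)=p\,v_{\id{P}}(c)$; both computations rest on the same facts $v_{\id{P}}(a^2+b)=v_{\id{P}}(2)$ and $\id{P}\mid c$.
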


\begin{proof}
	By Theorem~\ref{thm:FHS}, there are at most finitely many possible $\overline{K}$-isomorphism
	classes of elliptic curves over $E$ which are not modular. Let $j_1, j_2, \cdots , j_n \in K$
	be the $j$-invariants of these classes. Since $(a,b,c)$ is non-trivial we can define $\lambda:= \frac{2^2(a^2-b)}{a^2+b}$, a non-zero element. The $j$-invariant of $\E$ is
	\[j(\lambda)=2^8(\lambda+1)^3\lambda^{-1}.\]
	Each equation $j(\lambda)=j_i$ has at most three solutions $\lambda\in K$. Thus there are values $\lambda_1,\cdots,\lambda_m$ (where $m\le 3n$) such that if $\lambda\neq\lambda_k$ for al $k$ then $\E$ is modular. If $\lambda=\lambda_k$ for some $k$, then using  $v_\id{P}(a^2+b)=v_\id{P}(2)$ we have 
	\[v_{\id{P}}(\lambda_k)=pv_{\id{P}}(c)\ge p.\]
	Finally, taking $A_K:=\max\{v_\id{P}(\lambda_k)\}_{k=1}^m$ we get the desired bound.
\end{proof}
\begin{remark}
	In order to prove the finiteness of $\overline{K}$-isomorphism classes of elliptic curves $E$ which are not modular, in \cite{FHS} the authors proved that such a curve $E$ gives rise to a $K$-rational point on a modular curve of genus $g>1$. Then by Falting's theorem \cite{Faltings1983} there are finitely many curves $E$. Since Falting's theorem is ineffective so is the bound $A_K$.
\end{remark}

\subsubsection{General case}
When $K$ is an arbitrary number field (admitting possibly a complex embedding), the $\id{f}$ of~(\ref{eq:form}) will be a (weight 2) complex eigenform over $K$. We refer to \cite[Section 2]{SS} to see all the concepts regarding complex eigenforms and mod $p$ eigenforms over $K$. Since there are no modularity results in general, we have to make use of the following conjecture.
\begin{conj}[\cite{FKS}, Conjecture 4.1]\label{conj:modularity}
	Let $\overline{\rho}:\Gal_K\to\GL_2(\overline{\F}_p)$ be an odd, irreducible, continuous representation with Serre conductor $\id{N}$ and
	such that $\det(\overline{\rho}) =\chi_p$ is the mod $p$ cyclotomic character. Assume that $p$ is unramified in $K$
	and that $\overline{\rho}|_{\Gal_{K_{\id{p}}}}$
	arises from a finite-flat group scheme over $\Om_{K_\id{p}}$ 
	for every prime $\id{p}\mid p$. Then
	there is a weight two, mod $p$ eigenform $\theta$ over $K$ of level $\mathfrak{N}$ such that for all primes $\id{q}$ coprime
	to $p\id{N}$, we have
	\[\trace(\overline{\rho}(\Frob_\id{q}))=\theta(T_\id{q}),\]
	where $T_{\id{q}}$ denotes de Hecke operator at $\id{q}$.
\end{conj}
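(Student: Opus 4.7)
The statement is a version of Serre's modularity conjecture over a general number field $K$, restricted to weight two representations with cyclotomic determinant and satisfying the finite-flat condition at every prime above $p$. Over $K=\QQ$ this is the theorem of Khare--Wintenberger, so the natural plan is to attempt to adapt their strategy to the number field setting. That strategy has three pillars: \emph{potential modularity}, producing a compatible $\ell$-adic system attached to $\overline{\rho}$ after a suitable solvable base change; an $R=T$ \emph{modularity lifting theorem}, promoting residual to $p$-adic modularity; and a combinatorial level- and weight-switching induction that eliminates the auxiliary ramification introduced by the base change and returns one to $\overline{\rho}$ itself.

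Concretely, I would first invoke a Moret-Bailly / Taylor potential-automorphy argument to pass, after a finite solvable totally imaginary extension $L/K$, to a strictly compatible family $\{\rho_\ell\}$ lifting $\overline{\rho}|_{\Gal_L}$, one member of which is associated to an automorphic representation on $\GL_2/L$. The finite-flat hypothesis at every $\id{p}\mid p$, together with the oddness and (assumed) absolute irreducibility of $\overline{\rho}$, are precisely what allow a Calegari--Geraghty-style $R=T$ statement to upgrade residual modularity to $p$-adic modularity over $L$. Having shown $\rho_p|_{\Gal_L}$ to be modular, I would descend back to $K$ by solvable base change for mod $p$ eigensystems and extract the required $\theta$ as the Hecke eigensystem on the torsion cohomology class produced by the lifting theorem, its level being controlled by the Serre conductor $\id{N}$ via a standard mod $p$ level-lowering argument over $K$, and the trace identity $\trace(\overline{\rho}(\Frob_\id{q}))=\theta(T_\id{q})$ reading off at once from Eichler--Shimura-type relations in that cohomology.

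The main obstacle, and the reason I do not expect this outline to produce an unconditional proof, is that whenever $K$ has complex places each of the three ingredients is currently only available conditionally: Calegari--Geraghty-type modularity lifting requires vanishing hypotheses for integral cohomology of arithmetic locally symmetric spaces outside a middle degree; the existence of Galois representations attached to mod $p$ eigenforms over $K$ and their local-global compatibility at primes above $p$ are open beyond the totally real (and certain CM) cases; and solvable base change together with level lowering for mod $p$ eigenforms over Bianchi-type fields are largely unestablished in the strength needed here. For these reasons I would expect the program to yield at best a conditional proof with present technology --- which is precisely why \cite{FKS} record the statement as a conjecture, and why the paper under review is forced to assume it whenever $K$ admits a complex embedding.
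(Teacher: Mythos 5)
This statement is a conjecture, not a theorem of the paper: it is Conjecture 4.1 of \cite{FKS}, reproduced verbatim and \emph{assumed} as a hypothesis in Theorem~\ref{thm:main} whenever $K$ has a complex embedding. Neither the paper under review nor \cite{FKS} offers a proof, so there is no argument of the author's to compare yours against. You have correctly recognised this, and your own closing paragraph concedes that the outline cannot be completed with current technology; so what you have written is a (reasonable) survey of why the statement is plausible and of the Khare--Wintenberger template one would try to imitate, not a proof.

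As a heuristic sketch your account is essentially accurate, and your list of obstructions is the right one: over a field with complex places one lacks unconditional Galois representations attached to torsion Hecke eigenclasses with local-global compatibility at $p$, the Calegari--Geraghty $R=T$ machinery rests on unproven vanishing conjectures for the cohomology of arithmetic locally symmetric spaces outside the expected range of degrees, and base change and level optimization for mod $p$ eigenforms are not available in the required generality. Two small inaccuracies: the conjecture only assumes $\overline{\rho}$ irreducible, not absolutely irreducible, so your lifting step would need to handle (or rule out) the dihedral-type case separately; and the conclusion demands an eigenform of level exactly the Serre conductor $\id{N}$, so the ``standard mod $p$ level-lowering argument over $K$'' you invoke at the end is itself one of the open ingredients rather than a routine step. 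None of this affects the verdict: the statement is correctly treated by the paper as an unproven hypothesis, and your proposal should be read as motivation for the conjecture rather than as a proof of it.
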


\subsection{Level reduction and Eichler-Shimura}

In order to prove the surjectivity of our residual Galois representation we use the following well-known result about subgroups of $\GL_2(\F_p)$ (see \cite[Lemma 2]{SD} and \cite[Propositions 2.3 and 2.6]{Etropolski}).
\begin{prop}\label{prop:repimage}
	Let $E$ be an elliptic curve over a number field $K$ of degree $d$ and let $G \le
	\GL_2 (\F_p)$ be the image of the mod $p$ Galois representation of $E$. Then the following holds:
	\begin{itemize}
		\item if $p\mid \# G$ then either $G$ is reducible or $G$ contains $\SL_2(\F_p)$ and hence is absolutely irreducible.
		\item if $p\nmid \# G$ and $p>15d+1$ then $G$ is contained in a Cartan subgroup or $G$ is contained
		in the normalizer of Cartan subgroup but not the Cartan subgroup itself.
	\end{itemize}
\end{prop}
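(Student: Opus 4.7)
The plan is to invoke Dickson's classification of subgroups of $\GL_2(\F_p)$: up to conjugation any subgroup is (a) reducible (contained in a Borel), (b) contained in the normalizer of a split or non-split Cartan, (c) one of the exceptional subgroups whose projective image is isomorphic to $A_4$, $S_4$, or $A_5$, or (d) contains $\SL_2(\F_p)$.

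For the first bullet, since $p \mid \#G$, Sylow's theorem produces an element of order $p$ in $G$; up to conjugation this is a non-trivial unipotent matrix, which fixes a unique line in $\F_p^2$. The orders of subgroups in families (b) and (c) are coprime to $p$ for $p \geq 7$ (indeed subgroups in (b) have order dividing $2(p-1)^2$ or $2(p^2-1)$, and in (c) the order factors as $|Z|\cdot|PG|$ with $|Z|$ dividing $p-1$ and $|PG| \in \{12,24,60\}$). So $G$ must fall in family (a) or (d). In (a), $G$ preserves a line, hence is reducible. In (d), $\SL_2(\F_p)$ acts absolutely irreducibly on $\F_p^2$, so the same is true of $G$.

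For the second bullet, assume $p \nmid \#G$ and $p > 15d+1$. The hypothesis excludes family (d), and in family (a) semisimplicity of $G$ forces $G$ into a split Cartan, which is contained in the normalizer of a Cartan and is the conclusion to be proved. It therefore suffices to rule out family (c). The key input is the Weil pairing: $\det \rho_{E,p} = \chic$, so $\det(G) = \Gal(K(\zeta_p)/K)$ is a subgroup of $\F_p^\times$ of index at most $d$, hence of order at least $(p-1)/d$. On the other hand, writing $Z = G \cap \F_p^\times\cdot I$ for the scalar part and $PG$ for the projective image, one has $|G| = |Z| \cdot |PG|$ with $|PG| \in \{12,24,60\}$. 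A case-by-case analysis of how $\det$ factors through the scalar and projective parts (bounding the determinant values that can occur on lifts of elements of each prescribed projective order) forces $(p-1)/d \leq 15$, contradicting the hypothesis.

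The principal obstacle is this last numerical step: extracting the sharp constant $15$ requires careful bookkeeping across the three exceptional projective images $A_4$, $S_4$, $A_5$, tracking both $\det(Z) = \{z^2 : z \in Z\}$ and the determinants produced by lifts of elements of orders $2,3,4,5$ in $PG$. I would invoke the conclusion of \cite[Propositions 2.3 and 2.6]{Etropolski} rather than reproducing this analysis in detail.
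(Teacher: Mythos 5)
The paper itself gives no proof of this proposition --- it is imported verbatim from \cite[Lemma 2]{SD} and \cite[Propositions 2.3 and 2.6]{Etropolski} --- so the only question is whether your sketch would actually establish it. Your argument for the first bullet is fine: it is the standard Dickson/Serre argument and is exactly what \cite[Lemma 2]{SD} does. Likewise your reduction of the second bullet to excluding the exceptional case is correct (if $p\nmid\#G$ and $G$ is reducible, Maschke puts $G$ inside a split Cartan).

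The mechanism you propose for excluding the exceptional case, however, cannot work. You want to play the lower bound $[\F_p^\times:\det G]\le d$ (from the Weil pairing) against an upper bound on $\det G$ extracted from the structure of groups with projective image $A_4$, $S_4$, $A_5$. No such upper bound exists, because an exceptional subgroup may contain every scalar matrix: for any $p>3$ the group $\F_p^\times\cdot\widetilde{A_4}$ (scalars times the binary tetrahedral group inside $\SL_2(\F_p)$) is exceptional, has order prime to $p$, and satisfies $\det G=(\F_p^\times)^2$, of index $2$ --- already fatal for every $d\ge 2$; and for $p\equiv\pm3\pmod 8$ the full preimage in $\GL_2(\F_p)$ of $S_4\subset\PGL_2(\F_p)$ (which is then not contained in $\PSL_2(\F_p)$) is exceptional of order $24(p-1)$ with \emph{surjective} determinant, so the argument fails even for $d=1$. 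Hence no amount of bookkeeping over $A_4$, $S_4$, $A_5$ can produce the inequality $(p-1)/d\le 15$. The genuine input behind the constant $15d+1$ in \cite[Proposition 2.6]{Etropolski} is local at $p$ rather than global: for a prime $\id{p}\mid p$ of ramification index $e(\id{p}/p)\le d$, Serre's description of $\overline{\rho}_{E,p}|_{I_{\id{p}}}$ via the fundamental characters of tame inertia shows that the image of $I_{\id{p}}$ in $\PGL_2(\F_p)$ contains an element of order greater than $5$ as soon as $p>15\,e(\id{p}/p)+1$, whereas every element of $A_4$, $S_4$ and $A_5$ has order at most $5$. If you intend to do more than cite \cite{Etropolski}, that inertial argument is the step you must supply; the determinant alone does not suffice.
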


\begin{prop}[\cite{SS}, Proposition 6.1]\label{prop:irreducible}
	Let $L$ be a Galois number field and let $\id{q}$ be a prime of $L$. There
	is a constant $B_{L,\id{q}}$ such that the following is true: let $p > B_{L,\id{q}}$ be a rational prime and let $E/L$ be an elliptic curve that is semistable at all $\id{p} \mid p$, having potentially
	multiplicative reduction at $\id{q}$. Then $\overline{\rho}_{E,p}$ is irreducible.
\end{prop}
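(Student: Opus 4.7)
The plan is to argue by contradiction. Suppose $\overline{\rho}_{E,p}$ is reducible; then, up to semisimplification, it splits as $\theta \oplus \theta'$ for characters $\theta, \theta' \colon G_L \to \overline{\F}_p^\times$ with $\theta\theta' = \chi_p$. The goal is to show that, for $p$ beyond a bound depending only on $L$ and $\id{q}$, the combined ramification constraints force $\theta$ to lie in a finite $p$-independent family, which then clashes with the equation $\theta\theta' = \chi_p$.

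The first ingredient comes from the potentially multiplicative reduction at $\id{q}$. By the Tate curve description, for $p$ larger than the residue characteristic at $\id{q}$, there is a quadratic character $\varepsilon$ of $G_L$, ramified only at $\id{q}$ (and at $2$ if needed) with bounded conductor, such that $E\otimes\varepsilon$ is multiplicative at $\id{q}$; the restriction of $\overline{\rho}_{E,p}\otimes\varepsilon$ to $I_\id{q}$ is then upper triangular with diagonal $(1,\chi_p)$ and nontrivial wild part of order $p$ (cf.\ Lemma~\ref{lemma:2ismultiplicative} in the analogous setting). This pins down $\{\theta|_{I_\id{q}},\theta'|_{I_\id{q}}\}$ (up to the twist by $\varepsilon$) to $\{1,\chi_p|_{I_\id{q}}\}$. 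Next, I would use the semistability hypothesis at each $\id{p}\mid p$: for good reduction the restriction of $\overline{\rho}_{E,p}$ to $I_\id{p}$ comes from a finite flat group scheme, and for multiplicative reduction it is upper triangular with diagonal $(1,\chi_p)$. In both cases, $\theta|_{I_\id{p}}$ and $\theta'|_{I_\id{p}}$ each equal either $1$ or $\chi_p|_{I_\id{p}}$.

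The remaining task is to control the ramification at the other bad primes. The key observation is that the Serre conductor of $\overline{\rho}_{E,p}$ away from $p\id{q}$ is supported on a finite set of primes whose conductor exponents are uniformly bounded by the Artin conductor of the $\ell$-adic representation of $E$ (for any auxiliary prime $\ell$), and hence only on the isogeny class of $E$. Combined with the previous paragraph, a suitable power $\theta^{12}$ (chosen to absorb the local behavior at $p$ and the twist by $\varepsilon$) is a character of $G_L$ unramified outside a fixed finite set $S$ depending only on $L$ and $\id{q}$, with a bounded conductor. By class field theory for $L$, such characters factor through the ray class group of $L$ of a fixed conductor, so they range over a finite set independent of $p$.

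The main obstacle, and the heart of the argument, is to turn this finiteness into a numerical contradiction. The plan is to follow the David--Momose strategy: evaluating at the Frobenius of a fixed auxiliary prime $\id{l}$ of $L$ (split and large), the relation $\theta(\Frob_\id{l})\,\theta'(\Frob_\id{l}) \equiv \chi_p(\Frob_\id{l}) = \mathrm{N}(\id{l}) \pmod{p}$ forces a congruence between $p$ and one of finitely many nonzero algebraic integers determined by the finite list of candidates for $\theta$. Choosing $B_{L,\id{q}}$ larger than the norms of all such differences yields the sought contradiction for $p > B_{L,\id{q}}$. The subtle point is to argue that $\theta$ cannot itself be $1$ or $\chi_p$ globally, which is ruled out by combining the ramification profile at $\id{q}$ (where the unipotent part is nontrivial) with the fact that $E$ does not acquire a global $L$-rational point of order $p$ for $p$ large.
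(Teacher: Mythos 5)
First, note that the paper does not actually prove this proposition: it is quoted verbatim from \cite{SS} (Proposition~6.1), whose proof in turn rests on the Freitas--Siksek irreducibility criteria for Frey curves. Your overall strategy --- diagonal characters $\theta,\theta'$ with $\theta\theta'=\chi_p$, local analysis at $\id{q}$ and at the primes above $p$, and a Momose--David congruence at a Frobenius element --- is indeed the strategy behind that proof. However, two of your intermediate steps are wrong as stated, and they conceal the real difficulty. (i) You control the ramification of $\theta$ away from $p\id{q}$ by bounding the Serre conductor by ``the Artin conductor of the $\ell$-adic representation \dots hence only on the isogeny class of $E$''. That bound depends on $E$, which is not allowed: $E$ ranges over all curves semistable above $p$ and potentially multiplicative at $\id{q}$, and may have additive reduction at arbitrary primes. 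The correct statement is that $\theta^{12}$ is unramified at \emph{every} prime $\id{l}\nmid p$, because at a prime of additive potentially good reduction the image of $I_{\id{l}}$ is a group of order dividing $24$ whose abelianization has exponent dividing $12$, while at potentially multiplicative primes $\theta|_{I_{\id{l}}}$ is at worst quadratic. (ii) More seriously, you claim $\theta^{12}$ ``ranges over a finite set independent of $p$'' because it factors through a ray class group of fixed conductor. This is false: the conductor of $\theta^{12}$ is supported on the primes above $p$, which move with $p$ (for instance $\chi_p^{12}$ itself is such a character). What is finite and $p$-independent is the set of \emph{signatures} $(s_{\id{p}})_{\id{p}\mid p}\in\{0,1\}^{\{\id{p}\mid p\}}$ with $\theta^{12}|_{I_{\id{p}}}=\chi_p^{12 s_{\id{p}}}$; when the signature is mixed, no global cyclotomic twist of $\theta^{12}$ is everywhere unramified, and one cannot reduce to a fixed ray class group. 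That mixed case is the heart of the argument: one compares the value of $\theta^{12h_L}$ at a Frobenius (using the value at $\id{q}$ forced by the Tate-curve structure) with an expression of the form $\prod_\sigma\sigma(\beta)^{12s_\sigma}$ for a suitable generator $\beta$, and shows by archimedean estimates that the resulting integer is nonzero --- and this is precisely where the hypothesis that $L$ is \emph{Galois} is used. Your write-up never invokes that hypothesis, which is the clearest sign that the gap is genuine.

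Two smaller points. The claimed ``nontrivial wild part of order $p$'' at $\id{q}$ requires $p\nmid v_{\id{q}}(j)$; this holds for the Frey curve of Lemma~\ref{lemma:2ismultiplicative} but not for an arbitrary $E$ in the proposition. Fortunately it is not needed: one only uses that the diagonal characters restricted to $D_{\id{q}}$ are $\{\epsilon,\epsilon\chi_p\}$ for a quadratic character $\epsilon$. Finally, excluding $\theta\in\{1,\chi_p\}$ via Merel's uniform boundedness theorem is legitimate but unnecessary once the Frobenius congruence at $\id{q}$ is in place.
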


\begin{lemma}\label{lemma:surjectivity}
	Let $K$ be a number field  and let $\id{P}\in S_K$. There is a constant $C_K$ such that if $p>C_K$ and $(a,b,c)\in W_\id{P}$ then $\overline{\rho}_{\E,p}$ is surjective.
\end{lemma}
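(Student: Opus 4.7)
The plan is to identify the image $G := \overline{\rho}_{\E,p}(\Gal_K) \subseteq \GL_2(\F_p)$ with the whole of $\GL_2(\F_p)$ by excluding in turn each of the non-surjective subgroups allowed by Proposition~\ref{prop:repimage}. First, for $p > 8 v_{\id{P}}(2)$ Lemma~\ref{lemma:2ismultiplicative} guarantees that $p$ divides $\#G$, so the Cartan/normalizer--of--Cartan alternative in Proposition~\ref{prop:repimage} is unavailable; the only remaining possibilities are that $G$ is reducible or that $G \supseteq \SL_2(\F_p)$.

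To rule out reducibility I would pass to the Galois closure. Let $L$ be the Galois closure of $K/\Q$ and $\id{q}$ a prime of $L$ above $\id{P}$. By Lemma~\ref{lemma:condutor}, $\E$ is semistable outside $S_K$, and since $p$ is odd this property is preserved under base change to $L$ at every prime above $p$. By Lemma~\ref{lemma:2ismultiplicative}, $v_{\id{P}}(j_{\E}) < 0$, hence $\E/L$ still has potentially multiplicative reduction at $\id{q}$. Proposition~\ref{prop:irreducible} then yields a constant $B_{L,\id{q}}$ depending only on $K$ such that, for $p > B_{L,\id{q}}$, the restriction $\overline{\rho}_{\E,p}|_{\Gal_L}$ is irreducible; a fortiori $\overline{\rho}_{\E,p}$ itself is irreducible.

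For $p$ beyond both thresholds we conclude $\SL_2(\F_p) \subseteq G$. By Lemma~\ref{lemma:condutor} the determinant of $\overline{\rho}_{\E,p}$ is the mod $p$ cyclotomic character, whose image on $\Gal_K$ is $\Gal(K(\zeta_p)/K)$; this equals $\F_p^\times$ as soon as $K \cap \Q(\zeta_p) = \Q$, a condition which holds for all $p$ beyond an effective constant depending only on $K$. Taking $C_K$ to be the maximum of the three bounds just extracted gives $G = \GL_2(\F_p)$, as desired. The delicate point in the argument is the reducibility step: Proposition~\ref{prop:irreducible} is stated for Galois base fields, which is why the detour through $L$ is needed; everything else is a direct consequence of the earlier lemmas, together with the standard surjectivity of the cyclotomic character on $\Gal_K$ for large $p$.
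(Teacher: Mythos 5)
Your proposal is correct and follows essentially the same route as the paper: both arguments use Lemma~\ref{lemma:2ismultiplicative} to place an element of order $p$ in the image (excluding the Cartan alternatives of Proposition~\ref{prop:repimage}), pass to the Galois closure $L$ and a prime $\id{q}\mid\id{P}$ to invoke Proposition~\ref{prop:irreducible} for irreducibility, conclude that the image contains $\SL_2(\F_p)$, and finish with surjectivity of the mod $p$ cyclotomic character once $K\cap\Q(\zeta_p)=\Q$. The only cosmetic difference is the order in which the dichotomy and the irreducibility step are presented.
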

\begin{proof}
	The proof follows the same argument given in  \cite[Corollary 6.2]{SS}. By Lemma~\ref{lemma:2ismultiplicative}, if $p>8v_\id{P}(2)$, we have that $\E$ has potentially multiplicative reduction at $\id{P}$ and, by Lemma~\ref{lemma:condutor}, it is 
	semistable away from $S_K$. Let $L$ be a Galois closure of $K$, and let $\id{q}$ be a prime of $L$ above $\id{P}$. Now, applying Proposition~\ref{prop:irreducible} we get a constant $C_{L,\id{q}}$ such
	that $\overline{\rho}_{\E,p}(\Gal_L)$  is irreducible whenever $p > C_{L,\id{q}}$. Since there are only finitely many primes $\id{q}$ in $L$ dividing $\id{P}$, and $L$ only depends on $K$ then at the end we can obtain a bound depending only on $K$ and we denote it by $C_K$ (we assume $C_K>8v_\id{P}(2)$, since we applied Lemma~\ref{lemma:2ismultiplicative}). Now, applying Lemma~\ref{lemma:2ismultiplicative} again, we see that the image of $\overline{\rho}_{\E,p}$ contains an element of order $p$. By Proposition~\ref{prop:repimage} any subgroup of $\GL_2 (\F_p)$ having an element of order $p$ is either
	reducible or contains $\SL_2 (\F_p)$. As $p > C_K > 8v_\id{p}(2)$, the image contains $\SL_2 (\F_p)$.
	Finally, again taking $C_K$ large enough if needed, we can ensure that $K\cap \Q(\zeta_p) =\Q$. Therefore,
	$\chi_p = \det(\overline{\rho}_{\E,p})$ is surjective, giving the following short exact sequence
	\[1\xrightarrow{}\SL_2(\F_p)\xrightarrow{}\overline{\rho}_{\E,p}(\Gal_K)\xrightarrow{\det}\F_p^\times\xrightarrow{}1\]
and completing the proof.
\end{proof}

\subsubsection{Totally real case}

\begin{thm}[\cite{FS}, Theorem 7]\label{thm:FS}
	Let $K$ be a totally real field and $E/K$ an elliptic curve of conductor $\id{N}$. Let $p$ be a rational prime. For a prime $\id{q}$ of $K$, let $\Delta_\id{q}$ and $\id{N}_p$ as in the previous subsection. Suppose the following statements hold:
	\begin{itemize}
		\item $p\ge 5$, the ramification index $e(\id{p}\mid p)<p-1$, and $\Q(\zeta_p)^+\not\subset K$,
		\item $E$ is modular,
		\item $\overline{\rho}_{E,p}$ is irreducible,
		\item $E$ is semistable at all $\id{p}\mid p$,
		\item $p\mid v_\id{p}(\Delta_\id{q})$ for all $\id{p}\mid p$.
	\end{itemize}
	Then, there is a Hilbert eigenform $\id{f}$ of parallel weight $2$ that is new at level $\id{N}_p$ and some prime $\varpi$ of $\Q_\id{f}$ above $p$ such that $\overline{\rho}_{E,p}\sim\overline{\rho}_{\id{f},\varpi}$.
\end{thm}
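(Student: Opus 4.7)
The plan is to prove this as a Ribet-style level-lowering statement, starting from the modularity hypothesis and then stripping primes from the level one at a time using the local information carried by $\overline{\rho}_{E,p}$. The first step is to use the assumption that $E$ is modular to obtain a Hilbert eigenform $\mathfrak{g}$ of parallel weight $2$ and level exactly $\mathfrak{N}$, together with a prime $\varpi$ of $\Q_{\mathfrak{g}}$ above $p$, such that $\rho_{E,p}\sim\rho_{\mathfrak{g},\varpi}$, hence a fortiori $\overline{\rho}_{E,p}\sim\overline{\rho}_{\mathfrak{g},\varpi}$. The target is to show that the set of primes dividing the level of $\mathfrak{g}$ but not $\mathfrak{N}_p=\mathfrak{N}/\mathfrak{M}_p$ can be removed, producing a new eigenform $\mathfrak{f}$ that is new at $\mathfrak{N}_p$ and still realises $\overline{\rho}_{E,p}$.

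Next I would analyse, prime by prime, the primes $\mathfrak{q}\mid\mathfrak{M}_p$. By definition these are primes at which $E$ has multiplicative reduction (so $\mathfrak{q}\mid\mid\mathfrak{N}$) and $p\mid v_{\mathfrak{q}}(\Delta_{\mathfrak{q}})$. For $\mathfrak{q}\nmid p$, Tate's uniformisation of $E$ over $K_{\mathfrak{q}}$ combined with $p\mid v_{\mathfrak{q}}(\Delta_{\mathfrak{q}})$ forces $\overline{\rho}_{E,p}$ to be unramified at $\mathfrak{q}$. With $\overline{\rho}_{E,p}$ also irreducible by hypothesis, the level-lowering machinery of Jarvis and Fujiwara (building on Ribet and Mazur's principle) for Hilbert modular forms over totally real fields yields a new Hilbert eigenform of parallel weight $2$ whose level at $\mathfrak{q}$ drops, and whose associated mod $p$ representation still matches $\overline{\rho}_{E,p}$ at all Hecke eigenvalues away from $p\mathfrak{N}$. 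Iterating over all such $\mathfrak{q}\nmid p$ removes the prime-to-$p$ part of $\mathfrak{M}_p$ from the level.

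For the primes $\mathfrak{p}\mid p$ with $\mathfrak{p}\mid\mathfrak{M}_p$, the semistability of $E$ at $\mathfrak{p}$ together with $p\mid v_{\mathfrak{p}}(\Delta_{\mathfrak{p}})$ implies, via Tate curves and a descent argument, that $\overline{\rho}_{E,p}|_{G_{K_{\mathfrak{p}}}}$ arises from a finite flat group scheme over $\Om_{K_\mathfrak{p}}$. The hypotheses $p\ge 5$, $e(\mathfrak{p}\mid p)<p-1$ and $\Q(\zeta_p)^+\not\subset K$ are precisely the technical conditions under which the weight-lowering/finite-flat level-lowering theorems for Hilbert modular forms apply (these are the totally real analogues of the Breuil--Conrad--Diamond--Taylor type results, together with Kisin's modularity lifting theorems in the Barsotti--Tate case as extended by Gee and others to totally real fields). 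Applying this at each $\mathfrak{p}\mid p$ removes the $\mathfrak{p}$-part of $\mathfrak{M}_p$ from the level, leaving an eigenform whose level divides $\mathfrak{N}_p$.

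Finally, passing to the newform attached to this eigenform provides $\mathfrak{f}$ that is new at a level dividing $\mathfrak{N}_p$, and a standard argument using the prime-to-$\mathfrak{N}_p p$ Hecke eigenvalues (compared with those coming from $\overline{\rho}_{E,p}$) together with irreducibility shows that the new level is exactly $\mathfrak{N}_p$. The main obstacle, and the deepest input, will be the level- and weight-lowering step at primes above $p$: it requires the full strength of modularity lifting theorems for potentially Barsotti--Tate representations of $G_K$ with $K$ totally real, and it is exactly here that the conditions $e(\mathfrak{p}\mid p)<p-1$ and $\Q(\zeta_p)^+\not\subset K$ cannot be dispensed with in the present framework.
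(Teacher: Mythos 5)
This statement is quoted verbatim from Freitas--Siksek \cite{FS} (their Theorem~7) and the paper gives no proof of it, so there is no in-paper argument to compare against; I can only measure your sketch against the original source. Your outline is essentially the right one and matches what Freitas and Siksek actually do: start from the hypothesis that $E$ is modular to get a parallel weight $2$ Hilbert eigenform of level $\id{N}$, then strip the primes dividing $\id{M}_p$ from the level using the level-lowering theorems for Hilbert modular forms of Fujiwara, Jarvis and Rajaei, the key local inputs being that Tate's uniformisation together with $p\mid v_\id{q}(\Delta_\id{q})$ makes $\overline{\rho}_{E,p}$ unramified at $\id{q}\nmid p$ and finite flat at $\id{q}\mid p$. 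Two points in your write-up deserve correction or more care. First, the attribution of the hypotheses $e(\id{p}\mid p)<p-1$ and $\Q(\zeta_p)^+\not\subset K$: in the actual proof these are not imported from Kisin-style modularity lifting for potentially Barsotti--Tate representations, but are precisely the running hypotheses of the Fujiwara--Jarvis--Rajaei level-lowering theorems themselves (the second condition is the requirement $[K(\zeta_p):K]>2$ in Jarvis's version of Mazur's principle, and the first keeps one in the range where Raynaud's results on finite flat group schemes apply); since modularity is an assumption here, no $R=\mathbb{T}$ input is needed. Second, your final step, asserting that the resulting newform has level \emph{exactly} $\id{N}_p$ rather than a divisor of it, is waved through: level lowering only produces a form whose level divides $\id{N}_p$, and to pin the level down one must check that the Artin conductor of $\overline{\rho}_{E,p}$ away from $p$ is exactly the prime-to-$p$ part of $\id{N}_p$ (using that $\overline{\rho}_{E,p}$ remains ramified at the multiplicative primes $\id{q}$ with $p\nmid v_\id{q}(\Delta_\id{q})$ and that the conductor exponent at additive primes is unchanged mod $p$ for $p\ge 5$). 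This is routine but it is a genuine step, and in the intended applications of the theorem (where one must enumerate the possible forms $\id{f}$, or compute the conductor of $E_\id{f}$ in Theorem~\ref{thm:existencecurve}) the distinction matters.
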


Assuming $K$ totally real, and following the works of Blasius \cite{Blasius}, Darmon \cite{DarmonAms} and Zhang \cite{Zhang}, Freitas and Siksek obtained the following result.

\begin{thm}[\cite{FS}, Corollary 2.2]\label{thm:existencecurve}
	Let $E$ be an elliptic curve over a totally real field $K$, and $p$ be and odd prime. Suppose that $\overline{\rho}_{E,p}$ is irreducible, and $\overline{\rho}_{E,p}\sim\overline{\rho}_{\mathfrak{f},\varpi}$ for some Hilbert newform $\mathfrak{f}$ over $K$ of level $\id{N}$ and parallel weight $2$ which satisfies $\Q_{\mathfrak{f}}=\Q$. Let $\id{q}\nmid p$ be a prime ideal of $\Om_K$ such that
	\begin{itemize}
		\item $E$ has potentially multiplicative reduction at $\id{q}$,
		\item $p\mid\#\overline{\rho}_{E,p}(I_\id{q})$,
		\item $p\nmid(\text{Norm}_{K/\Q}(\id{q})\pm1)$.
	\end{itemize}
	Then there is an elliptic curve $E_\mathfrak{f}/K$ of conductor $\id{N}$ with the same $L$-function as $\mathfrak{f}$.
\end{thm}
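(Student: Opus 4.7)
The plan is to produce the elliptic curve $E_\mathfrak{f}$ by attaching an abelian variety of $\GL_2$-type to $\mathfrak{f}$ and exploiting the coefficient field condition $\Q_\mathfrak{f}=\Q$ to cut down its dimension to one. Since $\mathfrak{f}$ has parallel weight $2$ and rational Hecke eigenvalues, the compatible system $\{\rho_{\mathfrak{f},\varpi}\}$ takes values in $\GL_2(\Q_\ell)$, and the goal is to realise it geometrically as $H^1$ of an elliptic curve of conductor $\id{N}$.

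I would split into two cases according to the parity of $[K:\Q]$. If $[K:\Q]$ is odd, then by the theorem of Blasius (building on Hida and later refined by Darmon), there exists an abelian variety $A_\mathfrak{f}/K$ of $\GL_2$-type whose $\ell$-adic realisations reproduce $\rho_{\mathfrak{f},\varpi}$; since $\dim A_\mathfrak{f}=[\Q_\mathfrak{f}:\Q]=1$, this $A_\mathfrak{f}$ is the sought elliptic curve. The hypotheses on $\id{q}$ are not needed in this case.

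The delicate case is $[K:\Q]$ even, where no abelian variety is directly attached to $\mathfrak{f}$ by standard Hilbert modular variety methods. Here the three local hypotheses are used to transfer $\mathfrak{f}$ to a Shimura curve. The assumption that $E$ has potentially multiplicative reduction at $\id{q}$, combined with $p \mid \#\overline{\rho}_{E,p}(I_\id{q})$, shows that $\overline{\rho}_{E,p}$ restricted to $I_\id{q}$ is ramified with image of order $p$; the congruence $\overline{\rho}_{E,p}\sim\overline{\rho}_{\mathfrak{f},\varpi}$ then forces the same to hold for $\overline{\rho}_{\mathfrak{f},\varpi}$, and the exclusion $p\nmid (N(\id{q})\pm 1)$ rules out the principal series and unramified twists of Steinberg liftings, so that $\mathfrak{f}_\id{q}$ is itself a (twist of) Steinberg. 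Consequently $\id{q}\mid\id{N}$ exactly, and by Jacquet--Langlands $\mathfrak{f}$ transfers to an automorphic form on the quaternion algebra $B/K$ ramified exactly at $\id{q}$ and at all but one infinite place (which exists since the total number of such places is even). The associated Shimura curve $X^B/K$ is a coarse moduli space of abelian surfaces with quaternionic multiplication, and by the theorem of Zhang (extending Carayol) the $\mathfrak{f}$-isotypic piece of its Jacobian yields an abelian variety $A_\mathfrak{f}/K$ of $\GL_2$-type; the rationality $\Q_\mathfrak{f}=\Q$ again gives $\dim A_\mathfrak{f}=1$, producing $E_\mathfrak{f}$.

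In both cases the conductor and $L$-function of $E_\mathfrak{f}$ agree with those of $\mathfrak{f}$ via local-global compatibility of the attached Galois representations. The main obstacle is the even-degree case: verifying that the three numerical conditions on $\id{q}$ precisely ensure a Steinberg local component for $\mathfrak{f}$ at $\id{q}$ (this is what rules out the various obstructions to Jacquet--Langlands transfer), and carefully invoking Zhang's construction of $\GL_2$-type quotients of Shimura curve Jacobians over arbitrary totally real fields. Once the transfer is legitimate, the descent from dimension $[\Q_\mathfrak{f}:\Q]$ to dimension one is immediate from $\Q_\mathfrak{f}=\Q$.
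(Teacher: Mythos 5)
This statement is quoted verbatim from \cite{FS}, Corollary 2.2; the paper gives no proof of it, only the attribution to Blasius, Darmon and Zhang, so there is no internal argument to compare against. Your reconstruction is essentially the proof that Freitas and Siksek give: for $[K:\Q]$ odd the abelian variety attached to $\mathfrak{f}$ exists unconditionally and $\Q_\mathfrak{f}=\Q$ forces it to be an elliptic curve, while for $[K:\Q]$ even the three hypotheses on $\id{q}$ are exactly what is needed to show that the local component of $\mathfrak{f}$ at $\id{q}$ is special, so that $\mathfrak{f}$ transfers by Jacquet--Langlands to a Shimura curve and the Carayol--Zhang construction produces $E_\mathfrak{f}$ with the correct conductor. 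One sentence of yours is internally inconsistent: you say that $p\nmid(\mathrm{Norm}_{K/\Q}(\id{q})\pm 1)$ ``rules out the principal series and unramified twists of Steinberg liftings, so that $\mathfrak{f}_\id{q}$ is itself a (twist of) Steinberg.'' The unramified twists of Steinberg are precisely the representations you want to end up with (they are the ones with $v_\id{q}(\id{N})=1$); what the conditions actually rule out are the principal series (ramified or not) and the supercuspidals, since for those $\rho_{\mathfrak{f},\varpi}(I_\id{q})$ is finite of order dividing a power of the residue characteristic of $\id{q}$ times $\mathrm{Norm}_{K/\Q}(\id{q})^2-1$, which is prime to $p$, contradicting the divisibility $p\mid\#\overline{\rho}_{\mathfrak{f},\varpi}(I_\id{q})$ inherited from $E$ via the isomorphism of residual representations. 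With that correction your argument is the standard one.
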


Let $K$ be a totally real and $\id{P}\in S_K$ a fixed prime. If $(a,b,c)\in W_\id{P}$ with $p$ large enough, then by Corollary~\ref{coro:modularity}  $\E$ is modular and $\overline{\rho}_{\E,p}$ is irreducible, by Lemma~\ref{lemma:surjectivity}. Applying Lemma~\ref{lemma:condutor} and Theorem~\ref{thm:FS} we have that $\overline{\rho}_{\E,p}\sim\overline{\rho}_{\id{f},\varpi}$ for a Hilbert newform $\id{f}$ of level $\id{N}_p$ and some prime $\varpi\mid p$ of $\Q_\id{f}$. Enlarging $p$ if necessary we can assume $\Q_\id{f}=\Q$ (see \cite[Lemma 7.2]{SS}). Then,  Theorem~\ref{thm:existencecurve} gives the existence of a curve $E_\id{f}$ with conductor $\id{N}_p$ such that $\overline{\rho}_{\E,p}\sim\overline{\rho}_{E_\id{f},p}$.

\subsubsection{General case}

In general if $K$ admits a complex embedding, we assume Conjectures~\ref{conj:modularity} and~\ref{conj:fakecurves} and we apply the following proposition to prove Lemma~\ref{lemma:eichlershimura}.

\begin{prop}[\cite{SS}, Proposition 2.1]\label{prop:bajadadenivel}
	There is an integer $B(\id{N})$, depending only on $\id{N}$, such
	that for any prime $p > B(\id{N})$, every weight two mod $p$ eigenform of level $\id{N}$ lifts to a complex
	one.
\end{prop}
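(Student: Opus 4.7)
The plan is to exploit the fact that, at a fixed level $\id{N}$, the integral cohomology underlying both the complex and the mod $p$ theory is a finitely generated $\ZZ$-module; its torsion subgroup is therefore finite, and avoiding the finitely many rational primes that divide its order should suffice. The lifting itself would then be a direct application of the Deligne--Serre lemma.

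First I would set up the arithmetic locally symmetric space $Y_0(\id{N})$ associated with $\GL_2/K$ and consider its integral cohomology $H := H^i(Y_0(\id{N}), \ZZ)$ in the cohomological degree $i$ where weight-two Hecke eigensystems over $K$ live, equipped with its commuting action of the Hecke operators $T_\id{q}$. By finiteness of the cohomology of arithmetic groups (via Borel--Serre), both $H$ and $H^{i+1}(Y_0(\id{N}), \ZZ)$ are finitely generated over $\ZZ$, and their torsion subgroups have bounded order depending only on $\id{N}$. I would then take $B(\id{N})$ to be any integer larger than every prime dividing the product of these orders.

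For $p > B(\id{N})$ I would invoke the universal coefficient short exact sequence
\[0 \longrightarrow H \otimes \F_p \longrightarrow H^i(Y_0(\id{N}), \F_p) \longrightarrow H^{i+1}(Y_0(\id{N}), \ZZ)[p] \longrightarrow 0\]
to identify $H^i(Y_0(\id{N}), \F_p)$ with $(H/H_{\mathrm{tors}}) \otimes \F_p$, so that every weight-two mod $p$ eigenform of level $\id{N}$ appears as a simultaneous Hecke eigensystem in the mod $p$ reduction of a torsion-free integral module. Applying the Deligne--Serre lifting lemma to $(H/H_{\mathrm{tors}}) \otimes \ZZ_{(p)}$ together with its commuting Hecke operators, the mod $p$ eigensystem lifts, after extending scalars to the ring of integers $\Om$ in some finite extension of $\QQ_p$, to a genuine eigenvector with Hecke eigenvalues in $\Om$ reducing to the prescribed ones. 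Composing with any embedding $\Om \hookrightarrow \CC$ then produces the desired complex eigenform of level $\id{N}$ whose mod $p$ reduction is the original eigensystem.

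The principal obstacle in executing this plan cleanly is the cohomological bookkeeping rather than any genuinely hard ingredient: one has to verify that weight-two eigensystems occur in the same integral cohomological degree as they do complex analytically, that the Hecke action is compatible with reduction modulo $p$ and with passage to the torsion-free quotient, and that $H^*(Y_0(\id{N}), \ZZ)$ has finite torsion in the relevant degrees with a uniform bound independent of $p$. Once these standard compatibilities are in place, the statement reduces to the purely algebraic Deligne--Serre lemma, and the bound $B(\id{N})$ emerges explicitly as a quantity controlled only by the torsion in $H^*(Y_0(\id{N}), \ZZ)$.
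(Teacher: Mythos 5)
The statement is not proved in this paper at all: it is imported verbatim as \cite[Proposition 2.1]{SS}, and the proof given there is precisely the argument you outline --- finite generation of $H^{*}(Y_0(\id{N}),\ZZ)$ via Borel--Serre, the universal-coefficient sequence to discard the torsion contribution once $p$ exceeds the primes dividing the torsion in the relevant degrees, and the Deligne--Serre lifting lemma applied to the Hecke action on the torsion-free quotient. Your proposal is correct and follows essentially the same route as the cited source, so there is nothing to repair.
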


\begin{conj}[\cite{SS}, Conjecture 4.1]\label{conj:fakecurves}
	Let $\id{f}$ be a weight $2$ complex eigenform over $K$ of level $\id{N}$
	that is non-trivial and new. If $K$ has some real place, then there exists an elliptic curve $E_\id{f}/K$
	of conductor $\id{N}$ such that
	\begin{equation}\label{eq:ellipticcurve}
		\# E_\id{f}(\Om_K/\id{q})=1+\text{ Norm}_{K/\Q}(\id{q})-\id{f}(T_\id{q}) \text{ for all } \id{q}\nmid\id{N}.
	\end{equation}
	
	If $K$ is totally complex, then there exists either an elliptic curve $E_\id{f}$ of conductor $\id{N}$ satisfying~(\ref{eq:ellipticcurve}) or a fake elliptic curve $A_\id{f}$, of conductor $\id{N}^2$, such that
	\begin{equation}\label{eq:fakeelliptic}
		\# A_\id{f}(\Om_K/\id{q})=(1+\text{ Norm}_{K/\Q}(\id{q})-\id{f}(T_\id{q}))^2 \text{ for all } \id{q}\nmid\id{N}.
	\end{equation}
	
\end{conj}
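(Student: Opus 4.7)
The statement is essentially the Eichler--Shimura conjecture for weight two eigenforms over general number fields and remains open; the following outlines the natural two-step strategy.

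First, one would attach to the complex eigenform $\id{f}$ a compatible system of two-dimensional $p$-adic Galois representations $\rho_{\id{f},p}:\Gal_K\to\GL_2(\overline{\Q}_p)$ having the prescribed Hecke eigenvalues. When $\id{f}$ appears in the cohomology of the arithmetic locally symmetric space attached to $\GL_2/K$, this can be carried out in the regular cohomological weight range through the work of Harris--Lan--Taylor--Thorne and Scholze, via $p$-adic interpolation from Shimura varieties attached to unitary groups containing $\Res_{K/\Q}\GL_2$; the weight-two case, which lies outside that range, would need to be reached by $p$-adic deformation and congruence arguments on an appropriate eigenvariety. The representation $\rho_{\id{f},p}$ should then be odd, have Hodge--Tate weights $\{0,1\}$ at each $\id{p}\mid p$, and have Artin conductor away from $p$ dividing $\id{N}$.

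Second, one has to prove that $\rho_{\id{f},p}$ is \emph{geometric of the required type}: it should arise from the $p$-adic Tate module of an elliptic curve $E_{\id{f}}/K$ (when $K$ has a real place), or of either such a curve or of a fake elliptic curve $A_{\id{f}}/K$ (an abelian surface over $K$ with quaternionic multiplication, in the totally complex case). This is an instance of the Fontaine--Mazur conjecture for two-dimensional representations with Hodge--Tate weights $(0,1)$. Granting it, the identification of $L$-functions in~(\ref{eq:ellipticcurve}) and~(\ref{eq:fakeelliptic}) follows by comparing Hecke eigenvalues with Frobenius traces on a density-one set of primes and invoking Chebotarev. The real/complex dichotomy in the conclusion reflects the fact that, when $K$ admits a real place, archimedean sign considerations obstruct $\rho_{\id{f},p}$ from being induced from a character of a quaternion algebra, so the attached motive is forced to have dimension $1$ (rather than $2$) over its endomorphism algebra.

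The second step is the decisive obstacle. Fontaine--Mazur-type results are presently known essentially only where one can transfer, via modularity lifting theorems, to automorphic forms living on Shimura varieties (for instance Hilbert modular forms or forms on unitary Shimura varieties); the arithmetic locally symmetric spaces attached to $\GL_2/K$ for $K$ not totally real carry no algebraic structure from which $E_{\id{f}}$ or $A_{\id{f}}$ could be cut out. The Taylor--Wiles / $R=T$ machinery therefore cannot currently be used to promote an abstract $p$-adic representation to the Tate module of an abelian variety in this generality, and a substantial new input --- perhaps a genuinely $p$-adic construction via eigenvarieties interpolating geometric points, or a direct realisation of non-algebraic automorphic forms in a motivic category --- appears to be required before Conjecture~\ref{conj:fakecurves} can be established unconditionally.
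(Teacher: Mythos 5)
This statement is not proved in the paper: it is Conjecture~4.1 of \c{S}eng\"un--Siksek, imported verbatim and \emph{assumed} as a hypothesis in Theorem~\ref{thm:main} and Lemma~\ref{lemma:eichlershimura}, so there is no proof to compare yours against. You correctly recognise that it is an open Eichler--Shimura-type conjecture, and your outline (attaching a compatible system of Galois representations to $\id{f}$, then invoking a Fontaine--Mazur-type statement to realise it in the Tate module of an elliptic curve or a QM abelian surface, with the real-place dichotomy ruling out the fake elliptic curve case) is an accurate account of the expected strategy and of why it is currently out of reach; just be clear that this is a survey of the difficulty, not a proof, which is exactly how the paper treats it.
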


\begin{lemma}\label{lemma:eichlershimura}
	There is a non-trivial new, weight $2$ complex eigenform $\mathfrak{f}$ which has an associated elliptic curve $E_{\mathfrak{f}}/K$ of conductor $\mathfrak{N}'$ dividing $\id{N}_p$.
\end{lemma}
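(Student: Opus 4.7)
The plan is to apply Conjectures~\ref{conj:modularity} and~\ref{conj:fakecurves} in sequence, starting from the mod $p$ representation $\overline{\rho}_{\E,p}$ attached to the Frey curve of a putative solution in $W_\id{P}$.

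First I would verify that $\overline{\rho}_{\E,p}$ satisfies the hypotheses of Conjecture~\ref{conj:modularity}: oddness, cyclotomic determinant and finite-flatness at every prime $\id{p} \mid p$ are given by Lemma~\ref{lemma:condutor}; irreducibility (indeed surjectivity) is Lemma~\ref{lemma:surjectivity}, provided $p>C_K$; and by enlarging $p$ further we may assume $p$ is unramified in $K$. Conjecture~\ref{conj:modularity} then produces a weight two mod $p$ eigenform $\theta$ over $K$ of level equal to the Serre (prime-to-$p$) conductor $\mathfrak{N}$ of $\overline{\rho}_{\E,p}$, satisfying $\theta(T_\id{q}) = \trace(\overline{\rho}_{\E,p}(\Frob_\id{q}))$ for $\id{q} \nmid p\mathfrak{N}$.

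Second, by Remark~\ref{rem:serreconductor} the conductor $\mathfrak{N}$ is supported on $S_K$ and lies in a finite set depending only on $K$. Moreover, Lemma~\ref{lemma:condutor} shows that for every $\id{q} \notin S_K$ we have $p \mid v_\id{q}(\Delta_\id{q})$, so $\overline{\rho}_{\E,p}$ is unramified outside $S_K \cup \{\id{p}\mid p\}$, giving $\mathfrak{N} \mid \id{N}_p$. Applying Proposition~\ref{prop:bajadadenivel} to each admissible $\mathfrak{N}$ yields a uniform bound (depending only on $K$) beyond which $\theta$ lifts to a weight two complex eigenform $\mathfrak{f}$ over $K$ of level $\mathfrak{N}$; matching Hecke eigenvalues on a set of Frobenius of density one propagates the isomorphism to $\overline{\rho}_{\E,p} \sim \overline{\rho}_{\mathfrak{f},\varpi}$ for some prime $\varpi \mid p$ of $\Q_\mathfrak{f}$. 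The eigenform $\mathfrak{f}$ is non-trivial (its mod $p$ reduction has irreducible associated Galois representation, so it cannot be Eisenstein) and new at level $\mathfrak{N}$, since $\mathfrak{N}$ already equals the prime-to-$p$ part of the Artin conductor of $\overline{\rho}_{\mathfrak{f},\varpi}$.

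Finally, Conjecture~\ref{conj:fakecurves} provides either an elliptic curve $E_\mathfrak{f}/K$ of conductor $\mathfrak{N}' = \mathfrak{N} \mid \id{N}_p$ or, when $K$ is totally complex, possibly a fake elliptic curve $A_\mathfrak{f}$ of conductor $\mathfrak{N}^2$. The main obstacle is excluding this fake alternative, and I would argue as in \cite{SS}: fake elliptic curves are QM abelian surfaces, so they have potentially good reduction at every finite place; in particular $\#\overline{\rho}_{A_\mathfrak{f},p}(I_\id{P})$ is bounded independently of $p$. On the other hand, Lemma~\ref{lemma:2ismultiplicative} forces $p \mid \#\overline{\rho}_{\E,p}(I_\id{P})$, and for $p$ larger than this bound the congruence $\overline{\rho}_{\E,p} \sim \overline{\rho}_{A_\mathfrak{f},p}$ becomes impossible. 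Absorbing this threshold into the same uniform bound, we conclude that $\mathfrak{f}$ must correspond to a genuine elliptic curve $E_\mathfrak{f}/K$ with conductor dividing $\id{N}_p$, as claimed.
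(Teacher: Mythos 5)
Your proposal follows essentially the same route as the paper: apply Conjecture~\ref{conj:modularity} to $\overline{\rho}_{\E,p}$ (irreducible by Lemma~\ref{lemma:surjectivity}), lift the resulting mod $p$ eigenform to a complex one via Proposition~\ref{prop:bajadadenivel} using the finiteness of the possible Serre conductors (Remark~\ref{rem:serreconductor}), and invoke Conjecture~\ref{conj:fakecurves}, discarding the fake elliptic curve by playing potentially good reduction everywhere against $p\mid\#\overline{\rho}_{\E,p}(I_{\id{P}})$ --- which is precisely the content of the lemma of \cite{SS} that the paper cites at this point. Two small caveats. First, you claim $\mathfrak{f}$ is automatically new at level $\mathfrak{N}$ because $\mathfrak{N}$ equals the prime-to-$p$ Artin conductor of $\overline{\rho}_{\mathfrak{f},\varpi}$; this overreaches, since the level of the characteristic-zero eigenform is controlled by the conductor of $\rho_{\mathfrak{f},\varpi}$ rather than of its reduction, and over a general number field the Galois representations attached to complex eigenforms are themselves part of the conjectural framework. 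The paper avoids the issue by simply replacing $\mathfrak{f}$ with the new eigenform of smaller level from which it arises, which is exactly why the statement only asserts $\mathfrak{N}'\mid\id{N}_p$ rather than equality. Second, to extract an elliptic curve (rather than a higher-dimensional abelian variety) from Conjecture~\ref{conj:fakecurves} one needs $\Q_{\mathfrak{f}}=\Q$; the paper secures this for $p$ large by appealing to \cite{SS}, a step your write-up omits.
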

\begin{proof}
	By Lemma~\ref{lemma:surjectivity},  $\overline{\rho}_{\E,p}$ is surjective hence absolutely irreducible for $p> C_K$. Now, by Conjecture~\ref{conj:modularity} there is a weight $2 \mod p$ eigenform $\theta$ over $K$ of level $\mathfrak{N}_p$ such that for all primes $\id{q}$ coprime to $p\mathfrak{N}_p$ we have
	\[\trace(\overline{\rho}_{\E,p}(\Frob_\id{q}))=\theta(T_\id{q}).\]
	Since there are only finitely many possible values for $\mathfrak{N}_p$ (depending only on $K$, see Remark~\ref{rem:serreconductor}) taking $p$ large enough we have that for any level $\mathfrak{N}_p$ there will be  a weight $2$ complex eigenform $\mathfrak{f}$ with level $\mathfrak{N}_p$ that is a lift of $\theta$, by Proposition~\ref{prop:bajadadenivel}.
	Recall that we can assume $\Q_\mathfrak{f}=\Q$ for $p$ large enough. Note that $\id{f}$ is non-trivial, since $\overline{\rho}_{\E,p}$ is irreducible. If $\id{f}$ is not new, we can
	replace it with an equivalent new eigenform that is of smaller level. Then, we can obtain $\id{f}$ new and with level $\id{N}'$ dividing $\id{N}_p$.
	Applying Conjecture~\ref{conj:fakecurves} we get that $\id{f}$  either has an associated elliptic curve $E_\id{f}/K$ of conductor $\id{N}'$, or has an associated fake elliptic curve $A_\id{f}/K$ of conductor $\id{N}'^2$. The fake elliptic curve case can be discarded by \cite[Lemma 7.3]{SS}.
\end{proof}

In either case ($K$ totally real or not), we denote by $E'$ the elliptic curve $E_{\mathfrak{f}}$. We have that $\overline{\rho}_{\E,p}\sim \overline{\rho}_{E',p}$.

\begin{lemma}
	If $E'$ does not have a $K$-rational point of order $2$ and is not isogenous to an elliptic curve with a $K$-rational point of order $2$ then $p<C_{E'}$, where $C_{E'}$ is a constant depending only on $E'$.
\end{lemma}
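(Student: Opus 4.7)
The plan is to exploit the fact that $\E$ carries the $K$-rational $2$-torsion point $(0,0)$, while under the hypothesis neither $E'$ nor any curve $K$-isogenous to it has such a point, and to convert this $2$-adic discrepancy into a bound on $p$ by comparing Frobenius traces at a carefully chosen auxiliary prime.

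First, I would verify that the hypothesis forces $\overline{\rho}_{E',2}$ to be irreducible. Otherwise a $G_K$-stable line in $E'[2]$ would produce a $K$-rational $2$-isogeny $\phi\colon E'\to E''$; the kernel of the dual $\widehat{\phi}$, being a $G_K$-stable subgroup of $E''[2]$ of order $2$, is pointwise fixed by $G_K$ and therefore generated by a $K$-rational point of order $2$, exhibiting a curve $K$-isogenous to $E'$ with a $K$-rational $2$-torsion point and contradicting the hypothesis. Since $\GL_2(\F_2)\cong S_3$, the image of $\overline{\rho}_{E',2}$ is then $A_3$ or $S_3$, and in either case contains an element of order $3$, whose matrix trace in $\F_2$ one checks to be $1$.

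Next, by Chebotarev applied to the finite extension of $K$ cut out by $\overline{\rho}_{E',2}$, I would fix once and for all a prime $\id{q}$ of $K$, coprime to $2\id{N}_{E'}$, with $\overline{\rho}_{E',2}(\Frob_\id{q})$ of order $3$; the final constant $C_{E'}$ will depend on $E'$ precisely through this choice. By construction $a_\id{q}(E')$ is odd, and $\id{q}$ lies above an odd rational prime, so $\id{q}\notin S_K$. Lemma~\ref{lemma:condutor} then guarantees that $\E$ is semistable at $\id{q}$, with $p\mid v_\id{q}(\Disc_\id{q})$ in the multiplicative reduction case. For $p$ exceeding the rational prime under $\id{q}$, the representation $\overline{\rho}_{\E,p}$ is therefore unramified at $\id{q}$, and the isomorphism $\overline{\rho}_{\E,p}\sim\overline{\rho}_{E',p}$ yields the genuine congruence $\trace(\overline{\rho}_{\E,p}(\Frob_\id{q}))\equiv a_\id{q}(E')\pmod{p}$.

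Finally I would compute the left-hand side modulo $2$. If $\E$ has good reduction at $\id{q}$, the $K$-rational point $(0,0)\in\E$ reduces to a point of order $2$, forcing $2\mid \#\E(\F_\id{q})$ and so $a_\id{q}(\E)$ even. If $\E$ has multiplicative reduction at $\id{q}$, the Tate curve description of $\overline{\rho}_{\E,p}$ restricted to a decomposition group at $\id{q}$, combined with $p\mid v_\id{q}(\Disc_\id{q})$ to kill the extension class, gives $\trace(\overline{\rho}_{\E,p}(\Frob_\id{q}))\equiv \pm(1+N(\id{q}))\pmod{p}$, again even since $N(\id{q})$ is odd. Either way the two sides of the congruence differ by a nonzero odd integer bounded in absolute value by $N(\id{q})+1+2\sqrt{N(\id{q})}$ via Hasse, forcing $p\le N(\id{q})+1+2\sqrt{N(\id{q})}=:C_{E'}$. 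The main technical point lies in the multiplicative reduction case, where one must invoke the level-lowering condition of Lemma~\ref{lemma:condutor} to secure the unramifiedness and the correct Frobenius trace; the rest is routine Chebotarev plus Hasse bookkeeping.
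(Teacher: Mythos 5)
Your proof is correct, and it follows the same overall skeleton as the argument the paper invokes (the paper simply cites \cite[Theorem 2]{Katz} and defers to \cite[Lemma 3.6]{isik_kara_ozman_2022}): produce an auxiliary prime $\id{q}$, coprime to $2p\id{N}_{E'}$ and depending only on $E'$, at which $a_\id{q}(E')$ is odd while the trace of $\overline{\rho}_{\E,p}(\Frob_\id{q})$ is congruent mod $p$ to an even integer, and then bound $p$ by Hasse. The genuine difference is how the auxiliary prime is obtained: the cited proof gets it from Katz's theorem on torsion of reductions, whereas you derive it directly from the irreducibility of $\overline{\rho}_{E',2}$ together with Chebotarev, using that an irreducible subgroup of $\GL_2(\F_2)\cong S_3$ contains an order-$3$ element of trace $1$. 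This makes the $\ell=2$ case self-contained and elementary; note also that your detour through the dual isogeny is unnecessary, since a $G_K$-stable line in $E'[2]$ is already pointwise fixed, so reducibility would directly give $E'$ a $K$-rational point of order $2$ (thus only the first half of the hypothesis is really used at that step). Your handling of the two reduction types of $\E$ at $\id{q}$ (injectivity of reduction on $2$-torsion in the good case, the Tate-curve trace $\pm(1+\norm(\id{q}))$ in the multiplicative case, with unramifiedness supplied by Lemma~\ref{lemma:condutor}) is exactly what is needed to make the parity argument uniform in the solution $(a,b,c)$, so the resulting constant $C_{E'}=\norm(\id{q})+1+2\sqrt{\norm(\id{q})}$ indeed depends only on $E'$.
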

\begin{proof}
	It follows from \cite[Theorem 2]{Katz}. For the proof we refer the reader to \cite[Lemma 3.6]{isik_kara_ozman_2022}.
\end{proof}

\begin{thm}\label{thm:curveexistence}
	Let $K$ be a number field. If $K$ has at least one complex embedding assume Conjectures~\ref{conj:modularity} and~\ref{conj:fakecurves}. Let $\id{P}\in S_K$ some fixed prime. There exists a constant $B_K$ depending only on $K$ such that if $(a,b,c)\in W_\id{P}$ and $p>B_K$ then there is an elliptic curve $E'/K$ such that the following hold:
	\begin{enumerate}[(i)]
		\item $E'$ has good reduction outside $S_K$.
		\item $E'$ has a $K$-rational point of order $2$.
		\item $\overline{\rho}_{\E,p}\sim\overline{\rho}_{E',p}$.
		\item $v_{\id{P}}(j')<0$, where $j'$ is the $j$-invariant of $E'$.
	\end{enumerate}
\end{thm}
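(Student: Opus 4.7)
The plan is to assemble all the machinery set up in this section into a single construction and then verify each of the four conclusions. In the \emph{totally real} case, Corollary~\ref{coro:modularity} provides modularity of $\E$ for $p>A_K$, Lemma~\ref{lemma:surjectivity} provides irreducibility of $\overline{\rho}_{\E,p}$, and Lemma~\ref{lemma:condutor} yields semistability at primes above $p$ together with the divisibility $p\mid v_\id{q}(\Delta_\id{q})$ for every $\id{q}\notin S_K$. After enlarging $p$ to satisfy the remaining local hypotheses ($p\ge 5$, $e(\id{p}\mid p)<p-1$, $\Q(\zeta_p)^+\not\subset K$), Theorem~\ref{thm:FS} produces a Hilbert newform $\mathfrak{f}$ of level $\id{N}_p$ with $\overline{\rho}_{\E,p}\sim\overline{\rho}_{\mathfrak{f},\varpi}$; \cite[Lemma~7.2]{SS} lets us further enlarge $p$ so that $\Q_\mathfrak{f}=\Q$, and then Theorem~\ref{thm:existencecurve}, fed with Lemma~\ref{lemma:2ismultiplicative} and the trivial bound $p\nmid\mathrm{Norm}_{K/\Q}(\id{P})\pm 1$ (which holds for $p$ large), yields an elliptic curve $E_\mathfrak{f}/K$ of conductor $\id{N}_p$ with $\overline{\rho}_{\E,p}\sim\overline{\rho}_{E_\mathfrak{f},p}$. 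In the \emph{general} case, Lemma~\ref{lemma:eichlershimura} directly produces such an $E_\mathfrak{f}/K$, of conductor $\id{N}'\mid\id{N}_p$, under Conjectures~\ref{conj:modularity} and~\ref{conj:fakecurves}.

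Setting $E':=E_\mathfrak{f}$, property (iii) holds by construction. For (i), the conductor of $E'$ divides $\id{N}_p=\prod_{\id{P}\in S_K}\id{P}^{r'_\id{P}}$ by Lemma~\ref{lemma:condutor}, so $E'$ has good reduction outside $S_K$. For (iv), $\id{P}\mid 2$ while $p$ is a large odd prime, so $\id{P}\nmid p$; by (iii) combined with Lemma~\ref{lemma:2ismultiplicative} we obtain $p\mid\#\overline{\rho}_{E',p}(I_{\id{P}})$. If $E'$ had potentially good reduction at $\id{P}$, then $\#\overline{\rho}_{E',p}(I_{\id{P}})$ would be bounded independently of $p$ (the inertia acts through a finite quotient whose order depends only on the residue characteristic), contradicting the divisibility for $p$ large. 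Hence $E'$ has potentially multiplicative reduction at $\id{P}$, equivalently $v_{\id{P}}(j')<0$.

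It remains to arrange (ii) while preserving (i), (iii), (iv). By the lemma preceding the theorem, if $E'$ neither has nor is $K$-isogenous to a curve with a $K$-rational point of order $2$, then $p<C_{E'}$. Lemma~\ref{lemma:condutor} forces the conductor of $E'$ to lie in a finite set of ideals depending only on $K$ (the exponents $r'_\id{P}$ are bounded by $2+6v_\id{P}(2)$ and $S_K$ is finite), so Shafarevich's theorem gives only finitely many $K$-isogeny classes of candidate curves; absorbing the maximum of the corresponding $C_{E'}$ into $B_K$ allows us to replace $E'$ by a $K$-isogenous curve having a $K$-rational point of order $2$. Properties (i) and (iv) survive because reduction type, and in particular potential multiplicativity, is an isogeny invariant, and (iii) survives because the absolute irreducibility of $\overline{\rho}_{\E,p}$ (Lemma~\ref{lemma:surjectivity}) forces both mod $p$ representations of isogenous curves to coincide with their common semisimplification, which is isogeny-invariant by Brauer--Nesbitt. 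The main obstacle is precisely this last point, together with the packaging of all the intermediate thresholds (modularity, level lowering, Eichler--Shimura, surjectivity, Shafarevich) into a single bound $B_K$ depending only on $K$; each of them individually depends only on $K$ via the uniform boundedness of the level $\id{N}_p$, so taking a final maximum completes the proof.
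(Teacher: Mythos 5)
Your proof is correct and follows essentially the same route as the paper: the curve $E'$ is produced by the modularity/level-lowering/Eichler--Shimura chain (or by Lemma~\ref{lemma:eichlershimura} in the non-totally-real case), property (ii) is arranged via the preceding isogeny lemma, and (iv) via Lemma~\ref{lemma:2ismultiplicative} together with the finiteness of the inertia image in the potentially good reduction case, which is exactly the content of the cited \cite[Lemma 5.1]{SS}. You are in fact more explicit than the paper on two points it leaves implicit: the use of Shafarevich's theorem to turn the curve-dependent constant $C_{E'}$ into one depending only on $K$, and the Brauer--Nesbitt/irreducibility argument showing that (iii) survives replacing $E'$ by an isogenous curve.
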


\begin{proof}
	If $E'$ does not have a $K$-rational point of order $2$ then it is isogenous to an elliptic curve $E''$ with a $K$-rational point of order $2$ (if $p>C'_E$), so $\overline{\rho}_{\E,p}\sim\overline{\rho}_{E'',p}$. In such a case, we can replace $E'$ by $E''$ and find an elliptic curve satisfying (i), (ii) and (iii). It remains to prove that $v_{\id{P}}(j')<0$. By Lemma~\ref{lemma:2ismultiplicative}, $p\mid \# \rho_{E',p}(I_{\id{P}})$ (since $\# \overline{\rho}_{E',p}(I_{\id{P}}) =\# \overline{\rho}_{\E,p}(I_{\id{P}})$). Assuming $p>24$ this gives the result, by \cite[Lemma 5.1]{SS}.
\end{proof}

\subsection{Contradiction and proof of the main results}


In this subsection we will handle the last step of the modular method, giving the proof of Theorem~\ref{thm:main}. Then, we will prove Corollaries~\ref{coro:quadraticase} and~\ref{thm:Q2}, and we will finish with a remark about how to extent the known results for signature $(p,p,2)$. 

According to our strategy, we need to contradict Theorem~\ref{thm:curveexistence}. Here is where we use hypothesis \textbf{(}\textbf{H}$_\textbf{1}$\textbf{)} or \textbf{(}\textbf{H}$_\textbf{2}$\textbf{)}.
 The following proposition follows from the proof of \cite[Theorem 3]{mocanu}. We write a proof in order to clarify.
\begin{prop}[Mocanu]\label{prop:mocanu} Let $K$ be a number field satisfying \textbf{(}\textbf{H}$_\textbf{2}$\textbf{)} and let $\id{P}\in T_K$. Then, there are no elliptic curves $E/K$ satisfying the following:
	\begin{itemize}
		\item $E$ has good reduction outside $S_K$,
		\item $E$ has a $K$-rational point of order $2$,
		\item $v_{\id{P}}(j)<0$,  where $j$ is the $j$-invariant of $E$.
	\end{itemize}
\end{prop}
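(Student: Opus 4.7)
The strategy is to extract from such an $E$ a triple $(\alpha,\beta,\gamma) \in \Om_{S_K}^\times \times \Om_{S_K}^\times \times \Om_{S_K}$ with $\alpha + \beta = \gamma^2$ whose $\id{P}$-adic ratio violates the bound defining $T_K$, thereby contradicting $\id{P} \in T_K$.

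Since $E$ has a $K$-rational point of order $2$, translate it to the origin and write $E: y^2 = x(x^2 + ax + b)$, so $\Delta_E = 16\, b^2(a^2-4b)$ and $j_E = 2^8(a^2-3b)^3/(b^2(a^2-4b))$. The crucial preliminary step is to arrange, after a substitution $(x,y) \mapsto (u^2 x, u^3 y)$ with $u \in K^\times$, that $a \in \Om_{S_K}$ and $b,\, a^2-4b \in \Om_{S_K}^\times$. To achieve this, apply $2$-isogeny descent to the pair formed by $E$ and the $2$-isogenous curve $E': y^2 = x(x^2 - 2ax + (a^2-4b))$: both have good reduction outside $S_K$ and a $K$-rational $2$-torsion point, so the descent classes of $b$ and $a^2-4b$ in $K^\times/(K^\times)^2$ lie in the $S_K$-Selmer group attached to the descent. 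The hypothesis $\Cl_{S_K}(K)[2] = \{1\}$ (which also forces $\Cl_{S_K}(K)[2^k] = \{1\}$ for all $k\ge 1$) collapses this Selmer group onto $\Om_{S_K}^\times/(\Om_{S_K}^\times)^2$, allowing one to absorb the square parts of $b$ and $a^2-4b$ into the model change. This $2$-descent adjustment is the main obstacle of the proof and is exactly what Mocanu carries out in \cite[Theorem 3]{mocanu}.

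Once a model with the above integrality is in hand, set $\alpha := a^2 - 4b$, $\beta := 4b$, $\gamma := a$. The identity $a^2 = (a^2-4b) + 4b$ gives $\alpha + \beta = \gamma^2$ with $(\alpha,\beta,\gamma) \in \Om_{S_K}^\times \times \Om_{S_K}^\times \times \Om_{S_K}$, so the defining property of $T_K$ yields
\[
\bigl| v_\id{P}(\alpha/\beta) \bigr| \le 6\, v_\id{P}(2).
\]

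It remains to show that $v_\id{P}(j) < 0$ forces the opposite strict inequality. Set $m = v_\id{P}(2)$, $u = v_\id{P}(b)$, $w = v_\id{P}(a^2-4b)$; then $v_\id{P}(j) = 8m + 3\, v_\id{P}(a^2 - 3b) - 2u - w$ and $a^2 - 3b = (a^2 - 4b) + b$. A short case analysis gives:
\begin{itemize}
\item if $u = w$, then $v_\id{P}(a^2-3b) \ge u$, hence $v_\id{P}(j) \ge 8m \ge 0$, contradicting $v_\id{P}(j) < 0$;
\item if $u > w$, then $v_\id{P}(a^2-3b) = w$ and $v_\id{P}(j) = 8m - 2(u-w)$, so $v_\id{P}(j) < 0$ forces $u - w > 4m$, whence $v_\id{P}(\alpha/\beta) = (w-u) - 2m < -6m$;
\item if $u < w$, then $v_\id{P}(a^2-3b) = u$ and $v_\id{P}(j) = 8m - (w-u)$, so $v_\id{P}(j) < 0$ forces $w - u > 8m$, whence $v_\id{P}(\alpha/\beta) = (w-u) - 2m > 6m$.
\end{itemize}
In both remaining cases $|v_\id{P}(\alpha/\beta)| > 6\, v_\id{P}(2)$, which is the desired contradiction.
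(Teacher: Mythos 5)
Your overall route is the paper's: extract from $E$ a triple $(\alpha,\beta,\gamma)\in\Om_{S_K}^\times\times\Om_{S_K}^\times\times\Om_{S_K}$ with $\alpha+\beta=\gamma^2$ and $\alpha/\beta=(a^2-4b)/4b$, then play the $T_K$ bound against $v_\id{P}(j)<0$. Your closing case analysis is correct and is precisely the computation the paper delegates to Mocanu: with $\mu:=(a^2-4b)/b$ one has $v_\id{P}(\mu)=w-u$ and $j=2^8(\mu+1)^3/\mu$, and your three cases are the paper's cases $v_\id{P}(\mu)<0$, $=0$, $>0$. The gap is in the normalization step. The substitution $(x,y)\mapsto(u^2x,u^3y)$ sends $(a,b)$ to $(au^{-2},bu^{-4})$: it rescales $b$ and $a^2-4b$ by a \emph{fourth} power, not a square. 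Your descent argument (unramifiedness of the $2$-isogeny descent classes outside $S_K$, plus $\Cl_{S_K}(K)[2]=\{1\}$) only yields $b\in\Om_{S_K}^\times\cdot(K^\times)^2$, and an $S_K$-unit times a square need not be an $S_K$-unit times a fourth power, so ``absorbing the square parts into the model change'' does not go through as stated. (The normalization you want is in fact achievable --- good reduction at odd $\id{q}$ forces $v_\id{q}(b)\equiv 0\pmod 4$ by comparison with a local minimal model in the same normal form, and then $\Cl_{S_K}(K)[4]=\{1\}$ globalizes this --- but that is a different argument from the one you give, and it is not the argument the paper extracts from Mocanu's Theorem 3.)

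The paper sidesteps the issue by never changing the model: it sets $\lambda=a^2/b$, uses Mocanu's Lemma 17 together with $\Cl_{S_K}(K)[2]=\{1\}$ to write $\lambda=u\gamma^2$ with $u\in\Om_{S_K}^\times$, and takes $\alpha=(\lambda-4)/u$, $\beta=4/u$. Your argument is repaired the same way: from $b=ws^2$ with $w\in\Om_{S_K}^\times$ (which your descent step does give), divide the identity $a^2=(a^2-4b)+4b$ by $s^2$ instead of changing the Weierstrass model, taking $\gamma=a/s$, $\alpha=(a^2-4b)/s^2=\mu w$ and $\beta=4w$. One must then check that $\mu\in\Om_{S_K}^\times$ (this follows from $v_\id{q}(j)\ge 0$ for $\id{q}\notin S_K$ and $j=2^8(\mu+1)^3/\mu$) and that $a/s\in\Om_{S_K}$ (from $(a/s)^2=w(\mu+4)$). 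Since $\alpha/\beta=\mu/4$ is unchanged, your case analysis then applies verbatim and the contradiction with $\id{P}\in T_K$ is obtained exactly as you wrote it.
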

\begin{proof}
	Suppose there is an elliptic curve $E$ satisfying (i)--(iii). Since it has a $K$-rational point of order $2$ then we can assume that is of the form 
	\[E:y^2=x^2+ax^2+bx,\]
	with $j$-invariant $j=\frac{2^8(a^2-3b)^3}{b^2(a^2-4b)}$. Since $E$ has good reduction outside $S_K$ then $v_\id{q}(j)\ge0$ for all $\id{q}\not\in S_K$, therefore $j\in\Om_{S_K}$. Consider $\lambda:=\frac{a^2}{b}$ and $\mu:=\lambda-4=\frac{a^2-4b}{b}$. By \cite[Lemma 17 (i)]{mocanu} we get that
	\[ \lambda\Om_K=I^2J, \quad \text{ where } J \text{ is an } S_K\text{-ideal.}\]
	Then, as elements of $\Cl(K)$,  $[J]^{-1}=[I]^{2}$ and also $[J]\in\langle S_K \rangle$. This implies that $[I]\in\Cl_{S_K}(K)[2]=\{1\}$, so $[I]\in \langle S_K \rangle$. In other words, there exist $\gamma\in\Om_K$ and an $S_K$-ideal $\tilde{I}$ such that $I=\gamma\tilde{I}$. Then,
	\[\lambda\Om_K=\gamma^2\tilde{I}^2J, \quad \text{ where both } \tilde{I} \text{ and } J \text{ are } S_K\text{-ideals}.\]
Finally, $\frac{\lambda}{\gamma^2}\Om_K$ is an $S_K$-ideal, which implies that $u := \frac{\lambda}{\gamma^2}$ is an $S_K$-unit. Now, dividing $\mu + 4 = \lambda$ by $u$, we get
\begin{equation}\label{eq:mocanu}
	\alpha+\beta=\gamma^2,
\end{equation}
where $\alpha :=\frac{\mu}{u}\in\Om_{S_K}^\times$, $\beta := \frac{4}{u}\in \Om_{S_K}^\times$. In order to get a contradiction, it is enough to see that $v_\id{P}(j)\ge 0$. Using that $\frac{\alpha}{\beta}=\frac{\mu}{4}$ and equation~(\ref{eq:mocanu}), we can rewrite condition $|v_\id{P}(\frac{\alpha}{\beta})|\le 6v_\id{P}(2)$ as
\[-4v_\id{P}(2)\le v_\id{P}(\mu)\le 8v_\id{P}(2).\]
Now, the proof follows identically as in \cite[Theorem 3]{mocanu}, analizing the possibles values of $v_\id{P}(\mu)$, and using that
\[v_\id{P}(j)=8v_\id{P}(2)+3v_\id{P}(\mu+1)-v_\id{P}(\mu).\]
\end{proof}
Regarding \textbf{(}\textbf{H}$_\textbf{1}$\textbf{)}, we will make use of the following Theorem of Freitas, Kraus and Siksek (with $\ell=2$).
\begin{thm}[\cite{FKS}, Theorem 1]\label{thm:FKS}
	Let $\ell$ be a rational prime. Let $K$ be a number field satisfying the following conditions:
	\begin{itemize}
		\item $\Q(\zeta_\ell)\subset K$, where $\zeta_\ell$ is a primitive $\ell$-th root of unity;
		\item $K$ has a unique prime $\lambda$ above $\ell$;
		\item $\gcd(h_K^+,\ell(\ell-1))=1$.
	\end{itemize}
		Then there are no elliptic curves $E/K$ with a $K$-rational $\ell$-isogeny, good reduction
away from $\lambda$ and potentially multiplicative reduction at $\lambda$.
\end{thm}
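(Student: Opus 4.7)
The plan is to argue by contradiction using the character cut out by the rational $\ell$-isogeny. Suppose $E/K$ with all three properties exists; let $C\subset E[\ell]$ be the kernel of the isogeny and let $\chi\colon\Gal_K\to\F_\ell^\times$ describe the action of $\Gal_K$ on $C$. The Weil pairing gives $\chi\chi'=\chi_\ell$ for the companion character $\chi'$ on $E[\ell]/C$, and the hypothesis $\Q(\zeta_\ell)\subset K$ forces $\chi_\ell|_{\Gal_K}$ to be trivial, hence $\chi'=\chi^{-1}$.

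Next I would track the ramification of $\chi$. Good reduction outside $\lambda$, together with the uniqueness of the prime above $\ell$, shows that $\chi$ is unramified at every $\id{q}\neq\lambda$. At $\lambda$ itself one passes to the quadratic extension $L/K_\lambda$ over which $E$ acquires multiplicative reduction; Tate uniformisation presents $E[\ell]|_L$ as an extension of the trivial module by $\mu_\ell$, and since $\mu_\ell\subset K_\lambda\subset L$ both diagonal characters are unramified. Descending back to $K_\lambda$ twists by at most a quadratic character, so $\chi|_{I_\lambda}$ has order dividing $2$.

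Class field theory then realises $\chi$ as a character of a ray class group of $K$ of modulus supported at $\lambda$ with inertial contribution a $2$-group; its order must divide $\ell-1$ (from the codomain) and also divide a number built from $h_K^+$ together with that $2$-group. The hypothesis $\gcd(h_K^+,\ell(\ell-1))=1$ kills the class-group contribution and, for odd $\ell$, also the $2$-part, forcing $\chi$ to be trivial. The case $\ell=2$ is degenerate because $\F_2^\times$ is already trivial; there I would instead work directly with a Weierstrass model $y^2=x(x^2+Ax+B)$, use the hypotheses to turn $A$ and $B$ into $S_K$-units up to squares, and rule out the resulting $S_K$-unit equation at $\lambda$ under the constraint $v_\lambda(j_E)<0$, in the same spirit as Proposition~\ref{prop:mocanu}.

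Finally, with $\chi=\chi'=1$ the $\Gal_K$-action on $E[\ell]$ is unipotent, corresponding to a homomorphism $a\colon\Gal_K\to\F_\ell$ unramified outside $\lambda$. I expect the main obstacle to be this last step: one must match the global $a$ against the local Kummer class of the Tate parameter $q$ at $\lambda$ (with $v_\lambda(q)=-v_\lambda(j_E)\neq 0$, so $q$ moves nontrivially in $K_\lambda^\times/(K_\lambda^\times)^\ell$) and show, using the class-number hypothesis once more, that no compatible such $a$ exists, at which point the contradiction is complete.
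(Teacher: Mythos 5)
First, a framing remark: the paper does not prove this statement at all --- it is quoted verbatim from \cite[Theorem 1]{FKS} and used as a black box --- so your attempt has to be measured against the argument in that reference. Your opening moves (the isogeny character $\chi$, the identity $\chi\chi'=\overline{\chi}_\ell=1$ on $\Gal_K$, unramifiedness of $\chi$ away from $\lambda$, and $\chi|_{I_\lambda}$ of order dividing $2$ via the Tate curve) are correct and agree with how that proof begins. The first genuine error is the conclusion of your class-field-theory step: the hypotheses give $\chi^2=1$, not $\chi=1$. The narrow class number only controls characters unramified at \emph{all finite} places; since $\chi$ may be ramified at $\lambda$ and at the real places, it is only $\chi^2$ that factors through a quotient of $\Cl^+(K)$, and $\gcd(h_K^+,\ell-1)=1$ yields $\ord(\chi)\mid 2$ and nothing more. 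A nontrivial quadratic $\chi$ ramified only at $\lambda$ and infinity is perfectly consistent with all three hypotheses (e.g.\ $K=\Q$, $\ell=7$, $\chi$ the character of $\Q(\sqrt{-7})/\Q$), so ``forcing $\chi$ to be trivial'' is false. The standard repair, which you omit, is to replace $E$ by the quadratic twist $E^{\chi}$, which still has good reduction outside $\lambda$ and potentially multiplicative reduction at $\lambda$ and now carries a $K$-rational point of order $\ell$.

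The second, more serious gap is the endgame, which is where the entire content of the theorem sits. For $\ell=2$ you propose to reduce to an $S_K$-unit equation and ``rule it out'' in the spirit of Proposition~\ref{prop:mocanu}; but that proposition needs precisely the extra hypothesis $T_K\neq\emptyset$ from \textbf{(H$_2$)} to control the $\lambda$-adic valuations of the unit-equation solutions, and the whole point of the FKS theorem --- and the reason this paper can offer \textbf{(H$_1$)} as an alternative to \textbf{(H$_2$)} --- is that it dispenses with any such control. So your $\ell=2$ plan can at best recover the \textbf{(H$_2$)}-type statement, not this one. For odd $\ell$, the assertion that the class-number hypothesis leaves no room for the unipotent homomorphism $a\colon\Gal_K\to\F_\ell$ is also not right as stated: since $\lambda\mid\ell$, the local unit group $\Om_{K_\lambda}^{\times}$ has a large pro-$\ell$ part, so degree-$\ell$ abelian extensions of $K$ ramified only at $\lambda$ exist in abundance. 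What $\ell\nmid h_K^+$ actually buys is that the extension cut out by $a$ is totally ramified at $\lambda$; the contradiction must then be extracted by playing this off against the local Kummer class of the Tate parameter $q$ (and one must be careful that $q$ need not be nontrivial in $K_\lambda^\times/(K_\lambda^\times)^\ell$ unless $\ell\nmid v_\lambda(q)$). You correctly identify this as the main obstacle but do not carry it out, so the proposal establishes the (correct) reduction to $\chi^2=1$ and then stops short of, or argues incorrectly around, the part of the proof that the hypotheses are really for.
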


Now we are in conditions to prove the main theorem and its implications.
\begin{proof}[Proof of Theorem~\ref{thm:main}]
	Let $K$ be a number field satisfying \textbf{(}\textbf{H}$_\textbf{1}$\textbf{)} or \textbf{(}\textbf{H}$_\textbf{2}$\textbf{)} and let $\id{P}$ in $S_K$ in the former case and $\id{P}$ in $T_K$ in the latest. Take $B_K$ as in Theorem~\ref{thm:curveexistence}. Suppose  there is a non-trivial primitive solution $(a,b,c)$ of~(\ref{eq:42p}) with exponent $p>B_K$ such that $\id{P}\mid c$. By Remark~\ref{rem:valuation2}, we can assume $(a,b,c)\in W_\id{P}$, otherwise $(a,-b,c)$ would be in $W_\id{P}$. By Theorem~\ref{thm:curveexistence} we can find an elliptic curve $E'$ having a $K$-rational point of order $2$, good reduction outside $S_K$, multiplicative reduction at $\id{P}$ and related by $\overline{\rho}_{\E,p}\sim\overline{\rho}_{E',p}$. If \textbf{(}\textbf{H}$_\textbf{1}$\textbf{)} is satisfied, Theorem~\ref{thm:FKS} gives a contradiction. In case that \textbf{(}\textbf{H}$_\textbf{2}$\textbf{)} is satisfied, then Proposition~\ref{prop:mocanu} gives the desired contradiction.
\end{proof}

\begin{proof}[Proof of Corollary~\ref{coro:quadraticase}]
	Note that if $K=\Q(\sqrt{d})$ with $d>0$ a prime number such that $d\equiv5\pmod 8$ then $2$ is inert in $K$. Since $d$ is prime, by \cite[Proposition 1.3.2]{Lemmermeyer} we have $2\nmid h_K^+$  so \textbf{(}\textbf{H}$_\textbf{1}$\textbf{)} holds . Moreover, the effectivity of the bound follows from modularity of curves over real quadratic fields (see Theorem~\ref{thm:FHS}).
\end{proof}

\begin{remark}
	It is easy to see that  Corollary~\ref{coro:quadraticase} still holds for $d<0$ if we assume Conjectures~\ref{conj:modularity} and~\ref{conj:fakecurves}.
\end{remark}

\begin{proof}[Proof of Corollary~\ref{thm:Q2}]
	Since $\Q_{r,2}$ is a totally real field with odd narrow class number (see \cite[II]{Iwasawa}) then \textbf{(}\textbf{H}$_\textbf{1}$\textbf{)} is satisfied and we can apply Theorem~\ref{thm:main}. The  effectivity of the bound follows from modularity of curves over $\Z_p$-extensions of $\Q$ proved by Thorne \cite{Thorne}.
\end{proof}

\begin{remark}
	As was mentioned in Remark~\ref{rem:signature}, the same strategy applies for others signatures. In the particular case of signature $(p,p,2)$, we also have $\ell=2$ (following the remark notation). In \cite{mocanu} just \textbf{(}\textbf{H}$_\textbf{2}$\textbf{)} was used as a possible hypothesis, but in that case Theorem~\ref{thm:FKS} could also be applied in the last step of the strategy, so \textbf{(}\textbf{H}$_\textbf{1}$\textbf{)} would also work.  Note that just assuming \textbf{(}\textbf{H}$_\textbf{1}$\textbf{)}, Theorems 5 and 7 of \cite{mocanu} can be generalized. In the general case, we can get the following result.
	
	\begin{thm}
		Let $K$ be a number field satisfying \textbf{(}\textbf{H}$_\textbf{1}$\textbf{)} or \textbf{(}\textbf{H}$_\textbf{2}$\textbf{)}. If $K$ has at least one complex embedding, assume Conjectures~\ref{conj:modularity} and~\ref{conj:fakecurves}. Let $\id{P}$ the only prime in $S_K$ in case that \textbf{(}\textbf{H}$_\textbf{1}$\textbf{)} holds or be in $T_K$ in case that \textbf{(}\textbf{H}$_\textbf{2}$\textbf{)} holds. Then, there exists a bound $B_{K}$ (depending only on $K$) such that if $p>B_K$  the equation $x^p+y^p=z^2$ does not have non-trivial primitive  solutions $(a,b,c)\in\Om_K^3$ with $\id{P}\mid b$.
	\end{thm}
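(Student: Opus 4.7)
The plan is to transcribe essentially verbatim the four--step strategy developed in Section~\ref{section:equation}, replacing the Frey curve for signature $(4,2,p)$ by the classical Bennett--Skinner Frey curve adapted to signature $(p,p,2)$. Concretely, to a non-trivial primitive solution $(a,b,c)\in\Om_K^3$ of $x^p+y^p=z^2$ with $\id{P}\mid b$, I would attach
\[
E_{(a,b,c)}\colon y^2=x^3+2cx^2+b^p x,
\]
for which $(0,0)$ is a $K$-rational $2$-torsion point, the discriminant is $2^6 a^p b^{2p}$, and the $j$-invariant satisfies $v_{\id{P}}(j)=-p\,v_{\id{P}}(b)\cdot(\text{positive integer}) + O(v_{\id{P}}(2))$, so that $v_{\id{P}}(j)<0$ once $p$ is large. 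After normalizing the solution (by the analogue of Remark~\ref{rem:valuation2}), one checks that outside $S_K$ the given model is minimal and semistable, and that $p\mid v_{\id{q}}(\Delta_{\id{q}})$ for every $\id{q}\notin S_K$, so that the Serre conductor of $\overline{\rho}_{E_{(a,b,c)},p}$ is supported on $S_K$ and belongs to a finite set depending only on $K$.

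Next I would import, one by one, the analogues of Lemma~\ref{lemma:2ismultiplicative} (potentially multiplicative reduction at $\id{P}$ together with $p\mid\#\overline{\rho}_{E_{(a,b,c)},p}(I_{\id{P}})$), of Corollary~\ref{coro:modularity} (modularity in the totally real case via Theorem~\ref{thm:FHS}, or of Conjecture~\ref{conj:modularity} otherwise), of Lemma~\ref{lemma:surjectivity} (surjectivity of $\overline{\rho}_{E_{(a,b,c)},p}$ via Propositions~\ref{prop:irreducible} and~\ref{prop:repimage}), and finally of Lemma~\ref{lemma:eichlershimura} / Theorem~\ref{thm:curveexistence}. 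The outcome is that, for $p$ larger than some $B_K$ depending only on $K$, there exists an elliptic curve $E'/K$ with
\begin{enumerate}[(i)]
\item good reduction outside $S_K$,
\item a $K$-rational point of order $2$,
\item $\overline{\rho}_{E_{(a,b,c)},p}\sim\overline{\rho}_{E',p}$,
\item $v_{\id{P}}(j')<0$.
\end{enumerate}

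Finally, exactly as in the proof of Theorem~\ref{thm:main}, under $(\textbf{H}_\textbf{1})$ the existence of such an $E'$ contradicts Theorem~\ref{thm:FKS} (applied with $\ell=2$, noting that the $K$-rational $2$-torsion point supplies the required $K$-rational $2$-isogeny), while under $(\textbf{H}_\textbf{2})$ it contradicts Proposition~\ref{prop:mocanu}. In either case we conclude that no such solution with $p>B_K$ exists.

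The main obstacle I anticipate is purely bookkeeping rather than conceptual: one has to verify that the valuation analysis at $\id{P}$ for the new Frey curve (the analogue of Remark~\ref{rem:valuation2} and Lemma~\ref{lemma:2ismultiplicative}) really does yield $v_{\id{P}}(j_{E_{(a,b,c)}})<0$ and $p\mid\#\overline{\rho}_{E_{(a,b,c)},p}(I_{\id{P}})$ for every $\id{P}\in S_K$ (respectively $\id{P}\in T_K$), after possibly swapping the roles of $a$ and $b$ and absorbing units. The rest of the argument is then a mechanical reproduction of the machinery already set up in Sections~\ref{section:equation}, with the closing observation (made in the paragraph preceding the statement) that Theorem~\ref{thm:FKS} is available in the $(p,p,2)$ setting precisely because the relevant prime is $\ell=2$, which is exactly what allows $(\textbf{H}_\textbf{1})$ to be used in place of $(\textbf{H}_\textbf{2})$.
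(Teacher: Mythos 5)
Your proposal is correct and follows exactly the route the paper intends: the paper justifies this theorem only by the remark preceding it, namely that the signature $(p,p,2)$ Frey-curve machinery (as in Mocanu's work) goes through verbatim with the same four-step strategy, and that in the final step Theorem~\ref{thm:FKS} with $\ell=2$ supplies the contradiction under \textbf{(}\textbf{H}$_\textbf{1}$\textbf{)} while Proposition~\ref{prop:mocanu} handles \textbf{(}\textbf{H}$_\textbf{2}$\textbf{)}. Your write-up is simply a fuller transcription of that same argument, and your valuation bookkeeping at $\id{P}$ (e.g.\ $v_{\id{P}}(j)=12v_{\id{P}}(2)-2p\,v_{\id{P}}(b)<0$ and prime to $p$ for $p$ large) checks out.
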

	
\end{remark}

\bibliographystyle{alpha}
\bibliography{biblio}

\begin{thebibliography}{FLHS15}

\bibitem[Bla04]{Blasius}
Don Blasius.
\newblock Elliptic curves, {H}ilbert modular forms, and the {H}odge conjecture.
\newblock In {\em Contributions to automorphic forms, geometry, and number
  theory}, pages 83--103. Johns Hopkins Univ. Press, Baltimore, MD, 2004.

\bibitem[Dar04]{DarmonAms}
Henri Darmon.
\newblock {\em Rational points on modular elliptic curves}, volume 101 of {\em
  CBMS Regional Conference Series in Mathematics}.
\newblock Published for the Conference Board of the Mathematical Sciences,
  Washington, DC; by the American Mathematical Society, Providence, RI, 2004.

\bibitem[Dec16]{Deconinck}
Heline Deconinck.
\newblock On the generalized {F}ermat equation over totally real fields.
\newblock {\em Acta Arith.}, 173(3):225--237, 2016.

\bibitem[DG95]{Darmon1995}
Henri Darmon and Andrew Granville.
\newblock On the equations {$z^m=F(x,y)$} and {$Ax^p+By^q=Cz^r$}.
\newblock {\em Bull. London Math. Soc.}, 27(6):513--543, 1995.

\bibitem[DU09]{DieulefaitJimenez}
Luis Dieulefait and Jorge~Jim\'{e}nez Urroz.
\newblock {S}olving {F}ermat-type equations via modular {$\mathbb Q$}-curves
  over polyquadratic fields.
\newblock {\em J. Reine Angew. Math.}, 633:183--195, 2009.

\bibitem[Ell04]{Ellenberg}
Jordan~S. Ellenberg.
\newblock Galois representations attached to {$\mathbb Q$}-curves and the
  generalized {F}ermat equation {$A^4+B^2=C^p$}.
\newblock {\em Amer. J. Math.}, 126(4):763--787, 2004.

\bibitem[Etr15]{Etropolski}
Anastassia Etropolski.
\newblock Local-global principles for certain images of galois representations,
  2015.

\bibitem[Fal83]{Faltings1983}
G.~Faltings.
\newblock Endlichkeitss\"atze f\"ur abelsche variet\"aten \"uber zahlk\"orpern.
\newblock {\em Inventiones mathematicae}, 73:349--366, 1983.

\bibitem[FKS20]{FKS}
Nuno Freitas, Alain Kraus, and Samir Siksek.
\newblock Class field theory, {Diophantine} analysis and the asymptotic
  {Fermat}'s last theorem.
\newblock {\em Adv. Math.}, 363:37, 2020.
\newblock Id/No 106964.

\bibitem[FLHS15]{FHS}
Nuno Freitas, Bao~V. Le~Hung, and Samir Siksek.
\newblock Elliptic curves over real quadratic fields are modular.
\newblock {\em Invent. Math.}, 201(1):159--206, 2015.

\bibitem[FS15]{FS}
Nuno Freitas and Samir Siksek.
\newblock The asymptotic {F}ermat's {L}ast {T}heorem for five-sixths of real
  quadratic fields.
\newblock {\em Compos. Math.}, 151(8):1395--1415, 2015.

\bibitem[IKO20]{IKOpp2}
Erman I\c{s}ik, Yasemin Kara, and Ekin \"Ozman.
\newblock On ternary {D}iophantine equations of signature {$(p, p, 2)$} over
  number fields.
\newblock {\em Turkish J. Math.}, 44(4):1197--1211, 2020.

\bibitem[IKO22]{isik_kara_ozman_2022}
Erman I\c{s}ik, Yasemin Kara, and Ekin \"Ozman.
\newblock {O}n ternary {D}iophantine equations of signature $(p,p,3)$ over
  number fields.
\newblock {\em Canadian Journal of Mathematics}, pages 1--21, 2022.

\bibitem[Iwa56]{Iwasawa}
Kenkichi Iwasawa.
\newblock A note on class numbers of algebraic number fields.
\newblock {\em Abh. Math. Sem. Univ. Hamburg}, 20:257--258, 1956.

\bibitem[Kat81]{Katz}
Nicholas~M. Katz.
\newblock Galois properties of torsion points on abelian varieties.
\newblock {\em Invent. Math.}, 62(3):481--502, 1981.

\bibitem[KO20]{KaraOzman}
Yasemin Kara and Ekin \"Ozman.
\newblock Asymptotic generalized {F}ermat's last theorem over number fields.
\newblock {\em Int. J. Number Theory}, 16(5):907--924, 2020.

\bibitem[Lem10]{Lemmermeyer}
F.~Lemmermeyer.
\newblock {C}lass {F}ield {T}owers.
\newblock September 7, 2010.

\bibitem[Moc22]{mocanu}
Diana Mocanu.
\newblock {A}symptotic {F}ermat for signatures $(p,p,2)$ and $(p,p,3)$ over
  totally real fields, 2022.

\bibitem[PV22a]{PacettiVillagra}
Ariel Pacetti and Lucas {Villagra Torcomian}.
\newblock $\mathbb{Q}$-curves, {H}ecke characters and some {D}iophantine
  equations.
\newblock {\em Mathematics of Computation}, to appear, 2022.

\bibitem[PV22b]{PacettiVillagra2}
Ariel Pacetti and Lucas {Villagra Torcomian}.
\newblock $\mathbb{Q}$-curves, {H}ecke characters and some {D}iophantine
  equations {II}.
\newblock {\em Publicacions Matem\`atiques}, to appear, 2022.

\bibitem[SD75]{SD}
H.~P.~F. Swinnerton-Dyer.
\newblock Correction to: ``{O}n {$l$}-adic representations and congruences for
  coefficients of modular forms'' ({M}odular functions of one variable, {III}
  ({P}roc. {I}nternat. {S}ummer {S}chool, {U}niv. {A}ntwerp, 1972), pp. 1--55,
  {L}ecture {N}otes in {M}ath., {V}ol. 350, {S}pringer, {B}erlin, 1973).
\newblock In {\em Modular functions of one variable, {IV} ({P}roc. {I}nternat.
  {S}ummer {S}chool, {U}niv. {A}ntwerp, {A}ntwerp, 1972)}, pages 149. Lecture
  Notes in Math., Vol. 476, 1975.

\bibitem[Ser87]{Serre}
Jean-Pierre Serre.
\newblock Sur les repr\'{e}sentations modulaires de degr\'{e} {$2$} de {${\rm
  Gal}(\overline{\mathbb{Q}}/{\mathbb{Q}})$}.
\newblock {\em Duke Math. J.}, 54(1):179--230, 1987.

\bibitem[Sil94]{SilvermanAdv}
Joseph~H. Silverman.
\newblock {\em Advanced topics in the arithmetic of elliptic curves}, volume
  151 of {\em Graduate Texts in Mathematics}.
\newblock Springer-Verlag, New York, 1994.

\bibitem[{\c{S}}S18]{SS}
Mehmet~Haluk {\c{S}}eng{\"u}n and Samir Siksek.
\newblock On the asymptotic {Fermat}'s last theorem over number fields.
\newblock {\em Comment. Math. Helv.}, 93(2):359--375, 2018.

\bibitem[Tho19]{Thorne}
Jack~A. Thorne.
\newblock Elliptic curves over {$\mathbb{Q}_{\infty}$} are modular.
\newblock {\em J. Eur. Math. Soc. (JEMS)}, 21(7):1943--1948, 2019.

\bibitem[Wil95]{Wiles}
Andrew Wiles.
\newblock Modular elliptic curves and {F}ermat's last theorem.
\newblock {\em Ann. of Math. (2)}, 141(3):443--551, 1995.

\bibitem[Zha01]{Zhang}
Shouwu Zhang.
\newblock Heights of {H}eegner points on {S}himura curves.
\newblock {\em Ann. of Math. (2)}, 153(1):27--147, 2001.

\end{thebibliography}

\end{document}